\newtheorem{proposition}{Proposition}[section]
\newtheorem{lemma}[proposition]{Lemma}
\newtheorem{theorem}[proposition]{Theorem}
\newtheorem{conjecture}[proposition]{Conjecture}
\theoremstyle{definition}
\newtheorem{remark}[proposition]{Remark}
\numberwithin{equation}{section}
\newtheorem*{example*}{Example}
\begin{document}

\begin{center}
\LARGE
\textbf{Writing finite simple groups of Lie type \\ as products of subset conjugates}
\bigskip\bigskip

\large
Daniele Dona
\bigskip

\normalsize
HUN-REN Alfr\'ed R\'enyi Institute of Mathematics

Re\'altanoda utca 13-15, Budapest 1053, Hungary

\texttt{dona@renyi.hu}
\bigskip\medskip
\end{center}

\begin{minipage}{110mm}
\small
\textbf{Abstract.} The Liebeck-Nikolov-Shalev conjecture \cite{LNS12} asserts that, for any finite simple non-abelian group $G$ and any set $A\subseteq G$ with $|A|\geq 2$, $G$ is the product of at most $N\frac{\log|G|}{\log|A|}$ conjugates of $A$, for some absolute constant $N$.

For $G$ of Lie type, we prove that for any $\varepsilon>0$ there is some $N_{\varepsilon}$ for which $G$ is the product of at most $N_{\varepsilon}\left(\frac{\log|G|}{\log|A|}\right)^{1+\varepsilon}$ conjugates of either $A$ or $A^{-1}$. For symmetric sets, this improves on results of Liebeck, Nikolov, and Shalev \cite{LNS12} and Gill, Pyber, Short, and Szab\'o \cite{GPSS13}.

During the preparation of this paper, the proof of the Liebeck-Nikolov-Shalev conjecture was completed by Lifshitz \cite{Lif24}. Both papers use \cite{GLPS24} as a starting point. Lifshitz's argument uses heavy machinery from representation theory to complete the conjecture, whereas this paper achieves a more modest result by rather elementary combinatorial arguments.
\medskip

\textbf{Keywords.} Growth, conjugacy, finite simple groups.
\medskip

\textbf{MSC2020.} 20D06, 20F69, 20G40.
\end{minipage}
\medskip

\section{Introduction}

An important general question at the crossroad between finite group theory and combinatorics is whether we can represent a group $G$ as the product of few copies of a subset $A$ of $G$. The topic goes back at least to the first formulation of Babai's conjecture \cite{BS88}.

When we allow ourselves to take the product of conjugates of $A$, rather than of copies of $A$ itself, the number of necessary conjugates was generally expected to be very small, essentially the smallest possible up to multiplicative constant. This is the content of the conjecture below, due to Liebeck, Nikolov, and Shalev \cite{LNS12}.

\begin{conjecture}\label{co:conjlns}
There exists $N$ such that, for all finite simple groups $G$ and all subsets $A\subseteq G$ with $|A|\geq 2$, $G$ is the product of at most $N\frac{\log|G|}{\log|A|}$ conjugates of $A$.
\end{conjecture}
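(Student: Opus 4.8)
The plan is to reduce Conjecture~\ref{co:conjlns} to a covering statement for ``large'' subsets and a growth statement for ``small'' ones, and to glue the two. There being only finitely many sporadic groups, and the alternating and symmetric groups admitting direct combinatorial treatments in the spirit of \cite{BS88} (essentially already in \cite{LNS12,GPSS13}), the core case is $G$ of Lie type, including the unbounded-rank range, which is where I would concentrate.

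\emph{Large sets.} First I would fix an absolute $\delta>0$ and show that whenever $|A|\geq|G|^{\delta}$ there is an absolute $c=c(\delta)$ with $G$ equal to a product of at most $c$ conjugates of $A$. Here $\tfrac{\log|G|}{\log|A|}\leq\tfrac1\delta$, so this already gives the conjectured bound with $N=c\delta$. For the proof I would quote the relevant special case of \cite{GLPS24}; alternatively one can combine the elementary fact that $\bigcup_{g\in G}A^{g}$ is a union of conjugacy classes (hence of density bounded below once $A$ generates, which we may assume) with the Gowers--Babai--Nikolov--Pyber product trick and the lower bounds on the minimal degree of a nontrivial representation of $G$.

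\emph{Small sets: bootstrapping.} For $2\leq|A|<|G|^{\delta}$ the aim is to manufacture, from $A$ and at most $O\!\bigl(\tfrac{\log|G|}{\log|A|}\bigr)$ of its conjugates, a single set $B$ with $|B|\geq|G|^{\delta}$; appending the large-set step then costs $O(1)$ further conjugates. I would do this by iterating a \emph{conjugate growth step}: given a generating set $X$ with $|X|<|G|^{\delta}$, produce boundedly many conjugates $X^{h_1},\dots,X^{h_r}$, with $r=O(1)$, such that $\log|X^{h_1}\cdots X^{h_r}|\geq(1+\kappa)\log|X|$ for an absolute $\kappa>0$. Starting from $X=A$ and applying this $O\!\bigl(\log\tfrac{\delta\log|G|}{\log|A|}\bigr)$ times reaches density $|G|^{\delta}$; to respect the conjugate budget I would let the per-step cost be $O\!\bigl(\tfrac{\log|G|}{\log|X|}\bigr)$ rather than $O(1)$, so that the costs across stages form a geometric series summing to $O\!\bigl(\tfrac{\log|G|}{\log|A|}\bigr)$.

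\emph{The obstacle.} The growth step fails exactly when $X$ behaves like a proper subgroup $H\leq G$ — a parabolic, a subfield or classical subgroup, a torus normalizer, and so on — so that products of conjugates of $X$ merely move $H$ around inside $G$ without polynomial growth. Overcoming this requires, for each family in the Aschbacher-type description of the subgroup structure of $G$, a quantitative escape bound (boundedly many conjugates of $X$ whose product lies in no conjugate of the relevant overgroup) \emph{with constants uniform in the rank}. It is this uniformity, together with the need to pass from $A\cup A^{-1}\cup\{1\}$ — what the growth and covering arguments naturally produce — to $A$ alone, that I expect to be genuinely hard by elementary means: following \cite{Lif24} one would import character-theoretic estimates (bounds on $\sum_{\chi}\chi(g)^{k}/\chi(1)^{k-1}$ and on character values at the elements of $A$) to simultaneously force growth and kill the asymmetry, whereas a purely combinatorial route seems to cost an extra $\varepsilon$ in the exponent and a restriction to Lie type.
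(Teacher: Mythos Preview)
The statement is Conjecture~\ref{co:conjlns}, which the paper does \emph{not} prove: it is stated as a conjecture, with the full proof credited to Lifshitz~\cite{Lif24}. The paper's own contribution is the weaker Theorem~\ref{th:main}. So there is no in-paper proof to compare against, and your proposal is not a proof either --- you explicitly identify the obstacle (uniform-in-rank escape from proper subgroups, and the passage from $A\cup A^{-1}$ to $A$ alone) and concede that elementary methods seem to lose an $\varepsilon$ in the exponent while the full statement apparently needs the character bounds of~\cite{Lif24}. That diagnosis is accurate and matches the paper's own assessment.

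Two smaller points. Your claim that the alternating case is ``essentially already in \cite{LNS12,GPSS13}'' is too optimistic: those references give bounds of the shape \eqref{eq:boundlns}--\eqref{eq:boundgpss}, not the conjecture, and the paper treats the full conjecture as open until~\cite{Lif24}. And while your bootstrapping sketch is in the right spirit, the paper's actual route to the $1+\varepsilon$ bound for Lie type is different in its mechanism: rather than escape-from-subgroups, it runs a dichotomy on whether $X^{-1}X$ (or $XX^{-1}$) captures a $|C|^{1/4}$-fraction of some conjugacy class $C$. If so, \S\ref{se:xcldelta} combined with Theorem~\ref{th:dmp} finishes; if not, a counting argument produces $g$ with $X^{-1}X\cap (XX^{-1})^{g}=\{e\}$, forcing $|XX^{g}|=|X|^{2}$. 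The $\varepsilon$-loss there does not come from a weak growth constant but from the trimming step (Lemma~\ref{le:cutsmall}) needed to keep the dichotomy well-posed across iterations.
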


The conjecture generalizes a slightly earlier one by the same authors, where $A$ is a subgroup of $G$ rather than a subset \cite{LNS10}.

Many partial results have been known for years. The following two are relevant for us, in that they hold for all $G$ of Lie type and all $A$: the number of conjugates inside the product can be bounded by
\begin{align}
\left(\frac{\log|G|}{\log|A|}\right)^{N(r)} & \text{\ \ \ \ \ by \cite[Thm.~1]{LNS12},} \label{eq:boundlns} \\
N(r)\frac{\log|G|}{\log|A|} & \text{\ \ \ \ \ by \cite[Thm.~1.3]{GPSS13},} \label{eq:boundgpss}
\end{align}
where in both cases $N(r)$ is a function of the rank $r$ of $G$.

In an independent and far-reaching work that appeared during the preparation of this paper, Lifshitz \cite{Lif24} managed to complete the proof of Conjecture~\ref{co:conjlns}. For the Lie type case, Lifshitz's paper uses heavy machinery from character theory, stemming from Deligne-Lusztig theory (in the form of a recent character bound from Larsen and Tiep \cite{LT24}) and from the generalized Frobenius formula of \cite{GLPS24}.

Our main result is more modest in several ways than Conjecture~\ref{co:conjlns}. However, the proof relies on combinatorial arguments that are surprisingly quite elementary, much more than in \cite{Lif24}, and it may be of independent interest to see how far one can go with elementary tools.

\begin{theorem}\label{th:main}
For any $0<\varepsilon\leq\frac{1}{2}$ there exists $N_{\varepsilon}$ such that the following holds. Let $G$ be a finite simple group of Lie type, and let $A\subseteq G$ with $|A|\geq 2$. Then $G$ is the product of at most
\begin{equation*}
N_{\varepsilon}\left(\frac{\log|G|}{\log|A|}\right)^{1+\varepsilon}
\end{equation*}
conjugates of either $A$ or $A^{-1}$.
\end{theorem}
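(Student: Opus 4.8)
The plan is to use the freedom to replace $A$ by $A^{-1}$ in order to work with a symmetric set, and then to split into two regimes: if $A$ is large we quote the ``large set'' case of Conjecture~\ref{co:conjlns} (namely that $|A|\ge|G|^{\delta}$ forces a bounded number of conjugates), which serves as the starting point here via \cite{GLPS24}; if $A$ is small we first grow it, using few conjugates, to a set of size $\ge|G|^{\delta}$ and then finish with the large-set case. Write $L=\frac{\log|G|}{\log|A|}\ (\ge 1)$ and fix a small $\delta=\delta(\varepsilon)>0$. Replace $A$ by $B:=A^{-1}A$: then $1\in B=B^{-1}$, $|A|\le|B|\le|A|^{2}$, and every conjugate $B^{x}=(A^{x})^{-1}A^{x}$ is a product of one conjugate of $A^{-1}$ and one of $A$, so it suffices to write $G$ as a product of $O_{\varepsilon}(L^{1+\varepsilon})$ conjugates of $B$, at the cost of a factor $2$. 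If $|A|\ge|G|^{\delta/2}$ then $L\le 2/\delta=O_{\varepsilon}(1)$ and the large-set case writes $G$ as a product of $O_{\varepsilon}(1)\le O_{\varepsilon}(L^{1+\varepsilon})$ conjugates of $A$; so from now on assume $|A|<|G|^{\delta/2}$, whence $|B|<|G|^{\delta}$ and $L>2/\delta$.

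The core of the argument is the following rank-free \emph{batch-growth} statement, to be proved by elementary combinatorics: there are $\varepsilon'=\varepsilon'(\varepsilon)>0$ and $\delta>0$ such that for every finite simple group $G$ of Lie type, every symmetric $B\ni 1$ with $|B|<|G|^{\delta}$, and every integer $m\ge 2$, there exist $g_{1},\dots,g_{m}\in G$ with
\begin{equation*}
|B^{g_{1}}B^{g_{2}}\cdots B^{g_{m}}|\ \ge\ \min\!\big(|G|^{\delta},\,|B|^{m^{1-\varepsilon'}}\big).
\end{equation*}
Granting this, I would apply it once with $m$ the least integer satisfying $m^{1-\varepsilon'}\ge\delta L$, so that $m=O_{\delta}\!\big(L^{1/(1-\varepsilon')}\big)$; since $|B|\ge|A|=|G|^{1/L}$ this forces $|B|^{m^{1-\varepsilon'}}\ge|G|^{\delta}$, hence $S:=B^{g_{1}}\cdots B^{g_{m}}$ has $|S|\ge|G|^{\delta}$ and is a product of $m$ conjugates of $B$. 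Applying the large-set case to $S$ then writes $G$ as a product of $O_{\delta}(1)$ conjugates of $S$, i.e.\ of $O_{\delta}(1)\cdot m$ conjugates of $B$, i.e.\ of $O_{\varepsilon}(m)$ conjugates of $A^{\pm1}$; choosing $\varepsilon'=\varepsilon/2$ (so $1/(1-\varepsilon')\le 1+\varepsilon$ for $\varepsilon\le 1/2$) and $\delta=\delta(\varepsilon')$ as required, this is $O_{\varepsilon}(L^{1+\varepsilon})$, proving the theorem. (If the growth statement can only be obtained for bounded $m$, one instead iterates it $O(\log L)$ times along a geometrically increasing sequence of sets, with the same outcome.)

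Establishing the batch-growth statement is where I expect essentially all of the difficulty, and it is exactly what costs the factor $L^{\varepsilon}$ over the conjectural $O(L)$: the obvious tool, the product theorem of Pyber--Szab\'o and Breuillard--Green--Tao, is of no use because its growth exponent degrades with the rank $r$, and iterating it would merely reprove \eqref{eq:boundgpss} with a rank-dependent constant. The growth should instead come from conjugation itself. Since $B\supseteq\{1,g\}$ with $g\ne 1$ and $1\in B^{x}$, the set $B^{g_{1}}\cdots B^{g_{m}}$ equals $\{b_{1}^{g_{1}}\cdots b_{m}^{g_{m}}:b_{i}\in B\}$ and in particular contains every ordered subproduct of the conjugates $g^{g_{1}},\dots,g^{g_{m}}$ of $g$. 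Choosing the conjugators generically and estimating, by a union bound over pairs of index tuples, the collisions among these products shows that all $|B|^{m}$ of them are distinct provided the relevant conjugacy classes are large compared with $|G|$ --- and the crux is that this is false in high rank, where the minimal class size is only a small power of $|G|$, so that short relations such as $g^{g_{i}}=g^{g_{j}}$ become unavoidable. The way around this is to (a) genuinely use all of $B$ rather than just $\{1,g\}$, and (b) replace the rigid requirement ``all products distinct'' by ``enough are distinct'', tracked by an iterative count in which the size gained after $j$ factors is fed into the choice of the $(j+1)$-st conjugator; the slack in this softer accounting is precisely what degrades the exponent from $\Omega(m)$ to $m^{1-\varepsilon'}$. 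A final point is the degenerate possibility that $B$ lies in a proper subgroup $P<G$: this must be handled separately, by relating products of conjugates of $B$ to products of conjugates of $P$ (to which subgroup-type decomposition results apply) together with rank-free control on the overgroups of small subgroups of $G$, so that products of conjugates cannot remain confined.
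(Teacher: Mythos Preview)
Your proposal correctly identifies the overall architecture --- grow $A$ by conjugation until it reaches size $|G|^{\delta}$, then finish with the rank-free product theorem from \cite{GLPS24} --- and this is indeed exactly the outer frame of the paper's proof. But the ``batch-growth'' statement you isolate is not a lemma on the way to the theorem; it \emph{is} the theorem (for symmetric sets), and your sketch of how to prove it does not contain the ideas that actually make it work.

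The collision-counting you describe, tracking distinctness of products $b_{1}^{g_{1}}\cdots b_{m}^{g_{m}}$ for generic conjugators, cannot give rank-free growth in the regime that matters. As you yourself note, the minimal nontrivial class in $G$ has size only about $q^{r}$ while $|G|\approx q^{r^{2}}$, so a random $g_{j}$ conjugates $g$ into a set of that size and collisions $g^{g_{i}}=g^{g_{j}}$ are forced long before $|B|^{m}$ reaches anything like $|G|^{\delta}$. ``Use all of $B$'' and ``softer accounting'' are not mechanisms; in particular, nothing in your sketch explains how to go beyond the wall at $|B|\approx q^{Cr}$, which is precisely where the difficulty lies. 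The degenerate case of $B$ lying in a proper subgroup is a red herring: the obstruction is not confinement to subgroups but confinement to unions of small conjugacy classes.

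The paper's engine is a dichotomy you do not mention. Working with a set $X$ (already grown past $q^{Cr/\varepsilon}$ using \cite{GLPS24}), one asks whether $X^{-1}X$ or $XX^{-1}$ meets some nontrivial class $C$ in at least $|C|^{1/4}$ elements. If so, one has a large piece of a single class, and a separate argument (cover $C$ by $O\!\left(\frac{|C|\log|X|}{|X|}\right)$ conjugates of $X$, then feed this into the normal-set growth theorem of \cite{DMP24}) gives growth good enough for Conjecture~\ref{co:conjlns} itself. If not, then a pigeonhole over the $\le q^{4r}$ classes produces a single $g$ with $X^{-1}X\cap (XX^{-1})^{g}=\{e\}$, hence $|XX^{g}|=|X|^{2}$ exactly. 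The $\varepsilon$-loss you are aiming for does not come from soft collision counting; it comes from the need, after each doubling, to discard the part of $XX^{g}$ whose square meets the small classes (of size $<|X|^{\varepsilon/3}$), so that the dichotomy can be re-applied --- this trimming costs a factor $|X|^{\varepsilon}$ per step and is what turns the exponent $1$ into $1+\varepsilon$.
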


When $A$ is symmetric (for instance when $A$ is a subgroup), the result improves considerably on \eqref{eq:boundlns}. As observed by L.~Pyber (personal communication), Theorem~\ref{th:main} can be reformulated to directly improve \eqref{eq:boundgpss} as well: see Remark~\ref{re:pyber} for proof and comparisons.
\medskip

\textit{Note on chronology.} The result of this paper was already privately proved in August 2023, before the completion of the stronger work of Lifshitz \cite{Lif24}, and it was made privately available to L.~Pyber in January 2024. This paper ended up appearing publicly only after \cite{Lif24}. Both \cite{Lif24} and this paper depend on \cite{GLPS24}, in the latter case through Theorem~\ref{th:ptc}. No wrongful conduct can be ascribed to N.~Lifshitz, who was not yet a coauthor of \cite{GLPS24} in January 2024 and was unaware of the author's work until after the appearance of \cite{Lif24} in August 2024.

\section{Strategy}

Let us give here a general overview of our strategy. Throughout the paper, $r$ is the (untwisted) rank of $G$ and $q$ is the size of the base field of $G$. We want the set $A$ to ``grow quickly'' all the way to $G$, where with ``growing quickly'' we mean that we can find a product $X$ of $N$ conjugates of either $A$ or $A^{-1}$ where $|X|$ is particularly large; the optimal growth would be $|A|^{N}$ by order considerations, and we aim for not much less than that. With some caution, we are allowed to grow in several spurts: we can use one technique or result to make $A$ grow up to reaching a product $A'$ larger than some threshold, then make $A'$ grow using a different technique or result until we reach a product $A''$ larger than some second threshold, and so on.

Until $|A|$ is around $q^{r}$ we know that we grow very quickly (Proposition~\ref{pr:smallgrow}). A version of the product theorem valid up to conjugation (Theorem~\ref{th:ptc}) can make us grow for short intervals, such as from $q^{r}$ to $q^{Cr}$ and from $q^{r^{2}/C}$ to $|G|$ for any fixed constant $C$. The name ``product theorem'' comes from analogous results in the context of Babai's conjecture, such as the ones in \cite{Hel08} \cite{BGT11} \cite{PS16} \cite{BDH21}; such results already appear in the literature related to Conjecture~\ref{co:conjlns}, as they are used in the proof of \eqref{eq:boundlns}.

It remains to deal with growth from $q^{Cr}$ to $q^{r^{2}/C}$, which forms the main and novel part of our proof. Since the number of conjugacy classes is only $q^{4r}$ (Proposition~\ref{pr:fewclass}), in this interval we can assume to be working with subsets of conjugacy classes without losing too much of the size of $A$.

Two cases arise at this point: either our set $A$ (or rather $A^{-1}A$ or $AA^{-1}$, for technical reasons) occupies a large chunk of a conjugacy class, or it does not. In the former case, since we already know that classes grow quickly (Theorem~\ref{th:dmp}), we can make the large subset inherit the same growth rate: this is the content of \S\ref{se:xcldelta}, and the growth we achieve here is compatible not only with Theorem~\ref{th:main} but even with Conjecture~\ref{co:conjlns}. In the latter case, when $A$ occupies a small chunk of every class, there is enough manoeuvring space to produce a conjugate of $A$ completely disjoint from $A$; with such a conjugate in hand, we grow by standard arguments. In \S\ref{se:main} we tackle this second case and prove the main theorem.

We mention that many of the techniques, taken one by one, are applicable to more general groups. The pecularity of finite simple groups of Lie type is that the various pieces fit together to form a complete proof. To make an example, for $G=\mathrm{Alt}(n)$ there is a much wider gap between the size of the smallest conjugacy class and the number of conjugacy classes, in the ballpark of $n^{2}$ and $e^{\sqrt{n}}$ respectively, whereas for $G$ of Lie type we have $q^{r}$ and $q^{4r}$; therefore, a result like Theorem~\ref{th:ptc} would not be enough in $\mathrm{Alt}(n)$ to bridge the gap between Proposition~\ref{pr:smallgrow} and the main part of our strategy.

\section{Tools}

We start by quoting some group-theoretic results about finite simple groups of Lie type. First we have a lower bound on the size of any nontrivial conjugacy class: the following is \cite[Prop.~2.3]{GPSS13}, which relies on \cite[\S 5]{KL90}.

\begin{proposition}\label{pr:qnclass}
Let $G$ be a finite simple group of Lie type of rank $r$ over $\mathbb{F}_{q}$. Then $|G|\leq q^{8r^{2}}$, and every nontrivial conjugacy class $C$ has size $|C|\geq q^{r}$.
\end{proposition}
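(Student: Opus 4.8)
The plan is to establish the two assertions separately.

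For the bound $|G|\le q^{8r^{2}}$, recall that $G$ is a quotient of $O^{p'}(\mathbf{G}^{F})\le\mathbf{G}^{F}$ for a simple algebraic group $\mathbf{G}$ of rank $r$ (the rank of the ambient untwisted group, in the twisted cases) with a Frobenius endomorphism $F$ whose field of definition has size $q$. The standard order formula reads $|\mathbf{G}^{F}|=q^{N}\prod_{i=1}^{r}(q^{d_{i}}-\varepsilon_{i})$, where $N$ is the number of positive roots, the $d_{i}$ are the degrees of the Weyl group (so that $\sum_{i}d_{i}=N+r$ and $2N+r=\dim\mathbf{G}$), and each $\varepsilon_{i}$ is a root of unity; hence $|G|\le|\mathbf{G}^{F}|\le 2^{r}q^{N+\sum_{i}d_{i}}=2^{r}q^{\dim\mathbf{G}}$. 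Since $\dim\mathbf{G}\le 2r^{2}+r$ for the classical types, $\dim\mathbf{G}$ is a bounded constant for the (finitely many) exceptional types, and $2^{r}\le q^{r}$, this gives $|G|\le q^{\dim\mathbf{G}+r}\le q^{2r^{2}+2r}\le q^{8r^{2}}$ for all $q\ge 2$ and $r\ge 1$, with room to spare.

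For the lower bound on class sizes, observe first that since $G$ is simple and non-abelian we have $Z(G)=1$, so $C_{G}(g)$ is a proper subgroup of $G$ whenever $g\ne 1$; the inequality $|C|\ge q^{r}$ is thus equivalent to the statement that the index of the centralizer of any nontrivial element is at least $q^{r}$. I would prove this by passing to $\mathbf{G}$, which we may take simply connected, so that centralizers of semisimple elements are connected (Steinberg) and $G=\mathbf{G}^{F}/Z(\mathbf{G})^{F}$ outside finitely many small cases. Writing a lift of $g$ as $\tilde g=su=us$ with $s$ semisimple and $u$ unipotent, one has $C_{\mathbf{G}}(\tilde g)=C_{C_{\mathbf{G}}(s)}(u)$ with $C_{\mathbf{G}}(s)$ connected reductive of rank $r$; running through the possibilities for $s$ and $u$, the smallest conjugacy class among those of non-central elements of $\mathbf{G}^{F}$ turns out to be a class of long root elements (or of subregular semisimple elements), of size roughly $q^{\dim\mathcal{O}}$ with $\dim\mathcal{O}\ge 2r$ — for instance a transvection in $\mathrm{PSL}_{r+1}(q)$ lies in a class of size $\tfrac{(q^{r}-1)(q^{r+1}-1)}{q-1}$, of order $q^{2r}$, and all other classes are larger, as one checks type by type. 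Translating the estimate back to $G$ means keeping track of the index $|Z(\mathbf{G})^{F}|$ (dividing $q-1$ for type $A$ and bounded by $4$ otherwise), of the component group $\pi_{0}(C_{\mathbf{G}}(\tilde g))$ (of bounded order), and of the finitely many cases in which $G\ne\mathbf{G}^{F}/Z(\mathbf{G})^{F}$; these modify $|G|$ and $|C_{G}(g)|$ by compatible factors, so $|C|\ge q^{r}$ persists.

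The one delicate point is this last bookkeeping, which is why a bare dimension count is not quite enough and one instead leans on \cite[\S 5]{KL90}: the factor $q^{r}$ we need to gain is of the same rough size as the factors (up to about $2^{r}$, or $q$) hidden in the order formulas for reductive subgroups, in $|Z(\mathbf{G})^{F}|$, and in the component group, and for $q=2$ naive estimates lose more than that. Making this work requires either the exact order formulas or an appeal to the arithmetic of small fields — over $\mathbb{F}_{2}$, a semisimple element whose class has small codimension would need eigenvalues in $\mathbb{F}_{2}^{\times}=\{1\}$, so such classes are in fact far larger than the generic estimate suggests — and this is exactly what \cite[Prop.~2.3]{GPSS13}, relying on \cite[\S 5]{KL90}, establishes. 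In practice I would simply invoke that result.
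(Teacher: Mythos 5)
The paper gives no proof of this proposition: it simply quotes it as \cite[Prop.~2.3]{GPSS13}, which in turn relies on \cite[\S 5]{KL90}. Your proposal does essentially the same thing for the substantive class-size bound (you sketch the centralizer-dimension heuristic and honestly conclude by invoking the same reference), while adding a clean self-contained argument for the easy order bound $|G|\leq q^{8r^{2}}$ via the standard order formula $|\mathbf{G}^{F}|=q^{N}\prod_{i}(q^{d_{i}}-\varepsilon_{i})$; the only cosmetic slip is that the displayed chain $q^{\dim\mathbf{G}+r}\le q^{2r^{2}+2r}$ does not literally hold for $E_{8}$ (where $\dim\mathbf{G}=248>2\cdot 8^{2}+8$), though the final bound $q^{8r^{2}}$ still does, as you note the exceptional types should be handled separately.
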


We also need an upper bound on the number of conjugacy classes. The following is \cite[Thm.~1]{LP97}.

\begin{proposition}\label{pr:fewclass}
Let $G$ be a finite simple group of Lie type of rank $r$ over $\mathbb{F}_{q}$. There are $\leq(6q)^{r}\leq q^{4r}$ conjugacy classes in $G$.
\end{proposition}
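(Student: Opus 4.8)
The plan is to bound the number $k(G)$ of conjugacy classes of $G$ by moving to the ambient algebraic group and exploiting the Jordan decomposition. First I would realise $G$ as a quotient: let $\mathbf{G}$ be the simple simply connected algebraic group over $\overline{\mathbb{F}}_q$ of the same type as $G$, with Steinberg endomorphism $F$, so that (in all but the finitely many small exceptional cases, which I would simply check by hand) $G$ is the quotient of $\mathbf{G}^F$ by its centre. Since the conjugacy classes of a quotient are exactly the images of conjugacy classes, $k(G)\le k(\mathbf{G}^F)$, and it suffices to bound $k(\mathbf{G}^F)$ by $(6q)^r$ with $r=\mathrm{rank}(\mathbf{G})$ the untwisted rank of $G$ and $q$ the defining field size; the twisted types need only minor bookkeeping with the relative root datum and never make the estimate worse.

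The key structural input is the Jordan decomposition. Every $g\in\mathbf{G}^F$ factors uniquely as $g=su=us$ with $s$ semisimple and $u$ unipotent, and by uniqueness both $s$ and $u$ lie in $\mathbf{G}^F$. Because $\mathbf{G}$ is simply connected, a theorem of Steinberg says $C_{\mathbf{G}}(s)$ is connected, hence a connected reductive $F$-stable subgroup of rank $r$ with $C_{\mathbf{G}}(s)^F=C_{\mathbf{G}^F}(s)$. The class of $g$ in $\mathbf{G}^F$ is then recovered exactly from the class of $s$ together with the $C_{\mathbf{G}^F}(s)$-class of $u$, and conversely any semisimple class and any unipotent element of its centraliser arise this way; thus
\begin{equation*}
k(\mathbf{G}^F)=\sum_{[s]}u\bigl(C_{\mathbf{G}^F}(s)\bigr),
\end{equation*}
where $u(H)$ denotes the number of unipotent classes of $H$ and $[s]$ runs over semisimple classes of $\mathbf{G}^F$.

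It then remains to bound the two ingredients. For the number of semisimple classes I would invoke Steinberg's count: a semisimple simply connected group has exactly $q^r$ semisimple classes in $\mathbf{G}^F$. For the number of unipotent classes of $C_{\mathbf{G}^F}(s)$: the unipotent classes of a connected reductive algebraic group of rank $r$ are finite in number and independent of $q$ (partitions in the classical types, Bala--Carter data in the exceptional types), and each algebraic unipotent class splits over $\mathbb{F}_q$ into at most $|A(\mathcal{O})|$ rational classes, a bound again depending only on $r$; hence $u(C_{\mathbf{G}^F}(s))\le U(r)$ for some function $U$ of $r$ alone. Multiplying, $k(\mathbf{G}^F)\le q^r\,U(r)$, and one checks $U(r)\le 6^r$ type by type --- the classical types are the extreme case, where $U(r)$ is at most roughly $p(2r)$ times a power of $2$ and so stays well below $6^r$, the exceptional types contribute only a bounded amount, and for reductive (non-simple) centralisers the count is multiplicative over factors of smaller rank and does not degrade. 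This gives $k(G)\le k(\mathbf{G}^F)\le(6q)^r$, and $(6q)^r\le q^{4r}$ since $6\le q^3$ for every prime power $q$.

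I expect the main obstacle to be the exact semisimple count $q^r$: it is the linchpin of the argument and is genuinely non-elementary, relying on the Lang--Steinberg theorem and the action of $F$ on the cocharacter lattice modulo the Weyl group --- a cruder count through $\mathbf{G}^F$-classes of maximal tori would carry an extra $|W|$ factor and overshoot even $q^{4r}$. A secondary difficulty is pushing the constant down to exactly $6$, which forces one through the classification of unipotent classes and their component groups rather than an abstract estimate. If one only wants a bound of the shape $c^r q^r$ (still enough for the $q^{4r}$ used later), an alternative route is the Burnside identity $k(G)=|G|^{-1}\sum_{g}|C_G(g)|$ together with the fact that regular semisimple elements --- whose centralisers are maximal tori of order $\le(q+1)^r$ --- exhaust all but an $O_r(q^{-1})$ proportion of $G$; stratifying the remaining elements by the dimension of the centraliser then yields $k(G)=q^{r+O_r(1)}$, though with an implied constant that is not transparently $6^r$.
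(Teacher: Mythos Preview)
The paper does not supply its own proof of this proposition; it simply quotes the bound as \cite[Thm.~1]{LP97} (Liebeck--Pyber). Your proposal is essentially a reconstruction of the argument in that cited source: pass to the simply connected cover, use the Jordan decomposition to write $k(\mathbf{G}^F)=\sum_{[s]} u(C_{\mathbf{G}^F}(s))$, invoke Steinberg's count of $q^r$ semisimple classes, and bound the number of unipotent classes of each reductive centraliser by a function of $r$ alone (then checked to be at most $6^r$). So there is no alternative route in the paper to compare against, and your outline matches the standard proof.

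One small correction: your justification of $(6q)^r\le q^{4r}$ via ``$6\le q^3$ for every prime power $q$'' is fine for $q\ge 2$, but note that for the Suzuki and Ree groups the parameter playing the role of $q$ in the paper's conventions can be non-integral (e.g.\ $\sqrt{2}$ for ${}^2B_2(2)$); these are all bounded-rank families, so the inequality still holds after adjusting constants, but the blanket ``every prime power'' clause is not quite the right phrasing there.
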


Now we list several results more specifically about growth in $G$. First, we do not have to worry about the case of bounded rank thanks to the result below, which comes from \cite[Thm.~1.3]{GPSS13}.

\begin{theorem}\label{th:boundr}
Fix $r_{0}$, and let $G$ be a finite simple group of Lie type of rank $\leq r_{0}$. Conjecture~\ref{co:conjlns} holds for this $G$, with $N$ depending on $r_{0}$.
\end{theorem}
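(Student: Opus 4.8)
The plan is to make $A$ grow to all of $G$ by repeatedly multiplying the current product $P$ (always a product of conjugates of $A$) by a well-chosen conjugate of itself, keeping the total number of conjugates of $A$ used down to $O_{r_{0}}(\log|G|/\log|A|)$; it is natural to proceed in two regimes, according to whether $|P|$ is small or large compared with $q^{r}$.

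While $|P|<q^{r/4}$ one can afford \emph{exact squaring}: I claim there is $g\in G$ with $|PP^{g}|=|P|^{2}$. Indeed a collision $p_{1}p_{2}^{g}=p_{1}'p_{2}'^{g}$ with $(p_{1},p_{2})\neq(p_{1}',p_{2}')$ is equivalent to $(p_{1}')^{-1}p_{1}=(p_{2}'p_{2}^{-1})^{g}$, so for a fixed $P$ such a collision can occur only for $g$ that conjugate some nontrivial element of $PP^{-1}$ into $P^{-1}P$; by Proposition~\ref{pr:qnclass} every nontrivial class has size $\geq q^{r}$, hence every nontrivial centralizer has order $\leq|G|/q^{r}$, so the number of bad $g$ is at most $|PP^{-1}|\,|P^{-1}P|\,|G|/q^{r}\leq|P|^{4}|G|/q^{r}<|G|$ and a good $g$ exists. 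Each such step doubles $\log|P|$ while doubling the number of conjugates of $A$ used, so starting from $P=A$ we reach $|P|\geq q^{r/4}$ after $O(r\log q/\log|A|)$ conjugates of $A$, which by $|G|\leq q^{8r^{2}}$ is $O(\log|G|/\log|A|)$. Note this regime needs no assumption on $A$ at all.

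Once $|P|\geq q^{r/4}$ we have $\log|G|/\log|P|\leq 32r=O_{r_{0}}(1)$, so only a bounded number of further steps is needed. If $P$ generates $G$, the product theorem for groups of Lie type of bounded rank (Helfgott; Pyber--Szab\'o; Breuillard--Green--Tao) gives $|P^{3}|\geq|P|^{1+\delta}$ for a $\delta=\delta(r_{0})>0$ whenever $|P|\leq|G|^{1-\delta}$, so $O_{r_{0}}(1)$ triplings reach size $\geq|G|^{1-\delta}$, after which a bounded number of further products yields $G$, e.g.\ by quasirandomness (Gowers: $XYZ=G$ once $|X||Y||Z|\geq|G|^{3}/d$, with minimal representation dimension $d\geq|G|^{\Omega_{r_{0}}(1)}$). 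If instead $P$ does not generate $G$ it lies in a proper subgroup $M$, and then so does a conjugate of $A$; one fills $M$ (or a conjugate) by conjugates of $A$ by induction on $|G|$, using that the composition factors of $M$ are strictly smaller (abelian, bounded, or again of Lie type of rank $\leq r_{0}$), and then covers $G$ with $O_{r_{0}}(1)$ conjugates of $M$, which suffices because $\log|G|/\log|M|=O_{r_{0}}(1)$ here (cf.\ the subgroup case of the conjecture, \cite{LNS10}). In every case the slow regime adds only $O_{r_{0}}(1)\cdot(\log|G|/\log|A|)$ conjugates of $A$, and since the rank is bounded all the $\mathrm{poly}(r)$ factors accumulated along the way are absorbed into $N_{r_{0}}$.

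The delicate part is this last case, where the running product is trapped inside (or very close to) a proper subgroup $M$: there the product theorem is unavailable, and one must organize a clean induction through the subgroup structure of groups of Lie type — passing to smaller rank, to Levi factors, or to the relevant simple sections — without ever spoiling the linear dependence on $\log|G|/\log|A|$; fitting these cases together is the bulk of the work behind \cite[Thm.~1.3]{GPSS13}. A secondary, pervasive nuisance is that the conjecture permits conjugates only of $A$ and not of $A^{-1}$: this does not affect exact squaring (which uses $PP^{g}$, with no inverse), but in the finishing steps one first arranges, among bounded products of conjugates of $A$, one that contains the identity, after which the usual pigeonhole and quasirandomness manipulations go through.
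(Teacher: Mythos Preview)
The paper does not prove this statement at all: Theorem~\ref{th:boundr} is simply quoted from \cite[Thm.~1.3]{GPSS13}, with no argument given. So your proposal is really to be compared with the proof in \cite{GPSS13}, not with anything in the present paper.

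Your first regime is correct and is exactly \cite[Lemma~3.1]{GPSS13}, which the paper also invokes (Proposition~\ref{pr:smallgrow}): while $|P|^{4}<q^{r}$ one finds $g$ with $|PP^{g}|=|P|^{2}$, and this alone brings $P$ up to size $q^{\Theta(r)}$ using $O(\log|G|/\log|A|)$ conjugates of $A$.

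The gap is in your second regime. The Helfgott/Pyber--Szab\'o/Breuillard--Green--Tao product theorem you invoke is stated for \emph{symmetric generating} sets, and your running $P$ is neither: it is an asymmetric product of conjugates of $A$, and passing to $P^{-1}P$ to get symmetry would introduce conjugates of $A^{-1}$, which the conjecture forbids. Your workaround for the non-generating case (an ad hoc induction through maximal subgroups and their composition factors) is precisely the part you yourself flag as ``the bulk of the work behind \cite[Thm.~1.3]{GPSS13}'', so at that point your argument collapses back into a citation of the very result you are trying to prove. What \cite{GPSS13} actually does here is different: they first establish a \emph{conjugate} product theorem (their Thm.~1.4, the rank-dependent predecessor of the paper's Theorem~\ref{th:ptc}), namely that for any $S\subseteq G$ either $|SS^{g}|\geq|S|^{1+\delta(r)}$ for some $g$ or $G$ is a bounded product of conjugates of $S$; this requires no symmetry or generation hypothesis on $S$, and iterating it $O_{r_{0}}(1)$ times once $|P|\geq q^{r/4}$ finishes immediately. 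In short, your sketch identifies the right two-regime structure but substitutes the wrong tool in the second regime, and the substitute does not apply as stated.
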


The growth is already known to be as fast as possible for small sets, at least until we reach a size comparable to the smallest conjugacy class, as shown below.

\begin{proposition}\label{pr:smallgrow}
Let $G$ be a finite simple group of Lie type of rank $r$ over $\mathbb{F}_{q}$, and let $S\subseteq G$ with $|S|\geq 2$. There are some positive integer $m$ and some set $X$ with
\begin{align*}
X & =S^{h_{1}}S^{h_{2}}\ldots S^{h_{m}} \ \ (h_{i}\in G), & |X| & =|S|^{m}\geq\frac{q^{\frac{1}{2}r}}{|S|}.
\end{align*}
\end{proposition}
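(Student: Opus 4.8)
The plan is to grow $S$ by repeatedly right-multiplying by a cleverly chosen conjugate $S^{h}$ while keeping the product \emph{free}, i.e.\ of size exactly the product of the sizes, and to show that this can be continued until the product reaches size at least $q^{r/2}/|S|$. Concretely, I would let $m$ be the largest positive integer for which there exist $h_{1},\dots,h_{m}\in G$ with $|S^{h_{1}}\cdots S^{h_{m}}|=|S|^{m}$; such an $m$ exists, since $m=1$ works (take $h_{1}$ trivial) and $|S|^{m}\le|G|$ forces $m\le\log_{2}|G|$. Put $X=S^{h_{1}}\cdots S^{h_{m}}$. If $X=G$ we are done at once, since then $|X|=|G|\ge q^{r}\ge q^{r/2}/|S|$ by Proposition~\ref{pr:qnclass}, so assume $X\ne G$.

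By maximality of $m$, for every $h\in G$ the product map $X\times S^{h}\to XS^{h}$ fails to be injective (otherwise $XS^{h}$ would exhibit $m+1$ conjugates with product size $|S|^{m+1}$). Unwinding an equality $x_{1}s_{1}^{h}=x_{2}s_{2}^{h}$ with $(x_{1},s_{1})\ne(x_{2},s_{2})$ shows that $s_{1}\ne s_{2}$ and $x_{1}\ne x_{2}$, and yields a nontrivial $t=s_{2}s_{1}^{-1}\in SS^{-1}$ and a nontrivial $g=x_{2}^{-1}x_{1}\in X^{-1}X$ with $h^{-1}th=g$; equivalently, $X^{-1}X\cap(SS^{-1})^{h}\ne\{1\}$. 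Thus every $h\in G$ conjugates some nontrivial $t\in SS^{-1}$ into $X^{-1}X$. Now for fixed nontrivial $t$, the set $\{h:h^{-1}th=g\}$ is empty or a coset of $C_{G}(t)$, so it has at most $|C_{G}(t)|=|G|/|t^{G}|\le|G|/q^{r}$ elements, using Proposition~\ref{pr:qnclass} for the bound $|t^{G}|\ge q^{r}$. Hence the number of $h$ with $h^{-1}th\in X^{-1}X$ is at most $|X^{-1}X|\cdot|G|/q^{r}\le|X|^{2}|G|/q^{r}$. Summing over the at most $|S|^{2}$ nontrivial elements $t\in SS^{-1}$ gives $|G|\le|S|^{2}|X|^{2}|G|/q^{r}$, whence $|X|\ge q^{r/2}/|S|$; since $|X|=|S|^{m}$, the proposition follows.

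I do not anticipate a real obstacle: the argument is a short double count, and the only points needing care are the equivalence ``non-injective product $\Longleftrightarrow$ $X^{-1}X\cap(SS^{-1})^{h}\ne\{1\}$'' and the bookkeeping that $\#\{h:h^{-1}th=g\}\in\{0,|C_{G}(t)|\}$. The one structural input from $G$ being of Lie type is the lower bound $q^{r}$ on the size of a nontrivial conjugacy class, which is exactly what keeps the count of ``bad'' conjugators below $|G|$ until $|X|$ has grown past $q^{r/2}/|S|$.
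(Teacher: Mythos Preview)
Your argument is correct and is precisely the double-counting proof behind \cite[Lemma~3.1]{GPSS13}, which the paper invokes together with Proposition~\ref{pr:qnclass}; you have simply unpacked the citation. The only cosmetic point is that Proposition~\ref{pr:qnclass} does not literally state $|G|\ge q^{r}$, but this follows immediately from the existence of a nontrivial class of size at least $q^{r}$.
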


\begin{proof}
Combine \cite[Lemma 3.1]{GPSS13} and Proposition~\ref{pr:qnclass}.
\end{proof}

For $G$ of Lie type and $S\subseteq G$, we can also find a product $SS^{g}$ growing exponentially in size. This is \cite[Thm.~1.6]{GLPS24}, improving on \cite[Thm.~1.4]{GPSS13}, which as we mentioned is analogous to results called ``product theorems'' in the context of Babai's conjecture.

\begin{theorem}\label{th:ptc}
There are absolute constants $\tau>0$ and $b$ such that, for any finite simple non-abelian group $G$ and any subset $S\subseteq G$ with $|S|\geq 2$, either
\begin{enumerate}[(1)]
\item there is $g\in G$ with $|SS^{g}|\geq|S|^{1+\tau}$, or
\item $G$ is the product of $\leq b$ conjugates of $S$.
\end{enumerate}
\end{theorem}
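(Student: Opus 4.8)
The plan is to prove the dichotomy by a case split governed by the size of $S$ relative to the two natural scales supplied by Propositions~\ref{pr:qnclass} and~\ref{pr:fewclass}: the size $m(G)$ of the smallest nontrivial conjugacy class (with $q^{r}\le m(G)$) and the order $|G|\le q^{8r^{2}}$. The sporadic groups form a finite list and are absorbed by taking $b$ large, and the alternating groups $\mathrm{Alt}(n)$ can be handled by a dedicated argument using the action on $n$ points, so I will concentrate on $G$ of Lie type of rank $r$ over $\mathbb{F}_{q}$. I would then treat three regimes: $|S|^{2}\le q^{r}$ (sparse), $|S|\ge|G|^{1-\delta_{0}}$ (dense) for a small absolute $\delta_{0}>0$, and the intermediate range in between.

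\emph{The sparse regime.} Here I would establish alternative (1) with room to spare, by averaging a multiplicative energy over a random conjugator. Write $\mathcal{E}(S,S^{g})$ for the number of quadruples $(s_{1},s_{2},t_{1},t_{2})\in S^{4}$ with $s_{1}t_{1}^{g}=s_{2}t_{2}^{g}$, so that $|SS^{g}|\ge|S|^{4}/\mathcal{E}(S,S^{g})$ by Cauchy--Schwarz. Rewriting the defining equation as $s_{2}^{-1}s_{1}=(t_{2}t_{1}^{-1})^{g}$ and summing over conjugacy classes $C$ gives
\begin{equation*}
\frac{1}{|G|}\sum_{g\in G}\mathcal{E}(S,S^{g})=|S|^{2}+\sum_{C\neq\{1\}}\frac{e_{C}f_{C}}{|C|},
\end{equation*}
where $e_{C}=\#\{(s_{1},s_{2})\in S^{2}:s_{2}^{-1}s_{1}\in C\}$ and $f_{C}=\#\{(t_{1},t_{2})\in S^{2}:t_{2}t_{1}^{-1}\in C\}$, so $\sum_{C}e_{C}=\sum_{C}f_{C}=|S|^{2}$. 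Bounding $|C|\ge m(G)\ge q^{r}$ for $C\neq\{1\}$ and using $\sum_{C}e_{C}f_{C}\le(\sum_{C}e_{C})(\sum_{C}f_{C})$, the average is $\le|S|^{2}(1+|S|^{2}/q^{r})\le 2|S|^{2}$. Hence some $g$ has $\mathcal{E}(S,S^{g})\le 2|S|^{2}$, so $|SS^{g}|\ge|S|^{2}/2\ge|S|^{3/2}$ once $|S|\ge 4$; the finitely many pairs with $|S|\in\{2,3\}$ and $q^{r}$ small are finished off by hand or by Theorem~\ref{th:boundr}.

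\emph{The dense regime.} Here I would prove alternative (2) by a probabilistic covering argument: choosing $g_{1},\dots,g_{k}$ independent and uniform, for each fixed $x\in G$ the probability that $x\notin S^{g_{1}}\cdots S^{g_{k}}$ decays geometrically in $k$ at a rate controlled by $\delta_{0}$ — the same kind of estimate underlying the bounded-width results behind Theorem~\ref{th:boundr} — and a union bound over the $|G|$ values of $x$ then shows that $k=O(1/\delta_{0})$ conjugates already exhaust $G$, so $b=O(1/\delta_{0})$ works.

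\emph{The intermediate regime} is the crux and the part I expect to require real work. I would start from the structural form of the product theorem in $G$ (Pyber--Szab\'o / Breuillard--Green--Tao, \cite{PS16}\cite{BGT11}): unless $S$ is essentially confined to a bounded-index extension of a proper algebraic subgroup $H$, a bounded product $SSS$ already satisfies $|SSS|\ge|S|^{1+\tau(r)}$, after which a pigeonhole step inside that triple product produces a single $g$ with $|SS^{g}|\ge|S|^{1+\tau'(r)}$. The obstruction is that $\tau(r)\to 0$ as $r\to\infty$, and this is exactly where conjugation should be used to erase the rank dependence: if $S$ hugs such an $H$, then a \emph{generic} conjugate $H^{g}$ meets $H$ in strictly smaller dimension, so — made quantitative — $|S\cap H^{g}|\le|S|^{1-c}$ for $g$ outside a proper subvariety, whence $|SS^{g}|\ge|S|^{1+c}$ with $c$ absolute by a Pl\"unnecke--Ruzsa comparison of $SS^{g}$ with $S$. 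The genuinely exceptional cases are those in which no conjugate of $H$ is transverse to $S$ (for instance $H$ normalized by a large subgroup, so its conjugates pile up), and I would expect to push these back into the dense regime or kill them directly using the subgroup structure of finite groups of Lie type to bound $|S\cap H^{g}|$. The main obstacle, then, is making ``generic conjugates of a proper subgroup are transverse'' fully quantitative and \emph{uniform in the rank}, and stitching the three regimes together with mutually compatible constants; it is also here that one could instead import character theory — the generalized Frobenius formula of \cite{GLPS24} together with recent character bounds — to bypass the combinatorial transversality estimate altogether.
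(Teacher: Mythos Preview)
The paper does not prove Theorem~\ref{th:ptc}: it is quoted verbatim as \cite[Thm.~1.6]{GLPS24} and used as a black box, so there is no in-paper argument to compare your proposal against. The actual proof in \cite{GLPS24} goes through the generalized Frobenius formula and character bounds --- precisely the route you mention at the end as an alternative to your combinatorial plan.

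As for your proposal itself: the sparse regime is clean and correct (the averaged-energy identity and the bound $|C|\ge q^{r}$ from Proposition~\ref{pr:qnclass} do give $|SS^{g}|\ge|S|^{2}/2$ when $|S|^{2}\le q^{r}$), and the dense regime is standard via Gowers-type quasirandomness. The genuine gap is exactly where you locate it: in the intermediate range you invoke the Pyber--Szab\'o / Breuillard--Green--Tao product theorem, but its growth exponent $\tau(r)$ degenerates as $r\to\infty$, and your proposed fix --- that a generic conjugate $H^{g}$ of the obstructing subgroup is ``transverse'' to $H$ with $|S\cap H^{g}|\le|S|^{1-c}$ for an \emph{absolute} $c$ --- is asserted rather than proved. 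Making that transversality quantitative and uniform in the rank is the entire content of the theorem in this range; the structure of maximal subgroups of classical groups does not obviously yield such a uniform $c$, and indeed the known proofs bypass this combinatorial route altogether via representation theory. So your outline correctly isolates the difficulty but does not resolve it.
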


One can prove a version of the above using two distinct sets, provided that one of them is normal; the growth in this case is exceptionally fast. The result below is \cite{DMP24}.

\begin{theorem}\label{th:dmp}
For any $\varepsilon>0$ there exists $\eta=\eta(\varepsilon)>0$ such that, for any finite simple non-abelian group $G$, any subset $A\subseteq G$, and any normal subset $B\subseteq G$, if $|A|,|B|\leq|G|^{\eta}$ then $|AB|\geq|A||B|^{1-\varepsilon}$.
\end{theorem}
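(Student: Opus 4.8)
The plan is to run a multiplicative-energy (second-moment) argument whose real content is a structural bound on the ``autocorrelation spectrum'' of a small normal set. First, some easy reductions. If $|A|\le|B|^{\varepsilon}$ then $|AB|\ge|B|\ge|A||B|^{1-\varepsilon}$, so I may assume $|A|>|B|^{\varepsilon}$ (and $|B|\ge2$). Next I would reduce $B$ to a single conjugacy class: taking $C\subseteq B$ of maximal size, either $C$ accounts for a $\ge|B|^{-\varepsilon/2}$ fraction of $B$ and the problem passes to the pair $(A,C)$ with a mildly worse exponent, or $B$ is spread over more than $|B|^{\varepsilon/2}$ classes (note the number of classes is controlled by Proposition~\ref{pr:fewclass}) and a direct estimate suffices. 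From here on $B=C=c^{G}$ is a nontrivial class with $|C|\le|G|^{\eta}$.

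For the main estimate, put $f=\mathbf 1_{A}*\mathbf 1_{C}$, so $\sum_{x}f(x)=|A||C|$ and $\operatorname{supp}f=AC$; Cauchy--Schwarz gives $|AC|\ge(|A||C|)^{2}/E$ with
\[
E=\sum_{x}f(x)^{2}=\sum_{g\in G}|A\cap Ag|\,|C\cap g^{-1}C|,
\]
so it is enough to prove $E\le|A||C|^{1+\varepsilon}$. The term $g=1$ contributes exactly $|A||C|$. Writing $\rho(g)=|C\cap g^{-1}C|$ and using that $C$ is normal, one checks that $\rho$ is a \emph{class function} of $g$ (conjugate $g$ and $C$ simultaneously), so for each threshold $\delta$ the ``popular set'' $\mathcal P_{\delta}=\{g\ne1:\rho(g)\ge\delta|C|\}$ is itself a \emph{normal} set, and $\sum_{g}\rho(g)=|C|^{2}$ gives $|\mathcal P_{\delta}|\le\delta^{-1}|C|$. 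Choosing $\delta$ a small power such as $|C|^{\varepsilon}/|A|$, the contribution of $g\notin\mathcal P_{\delta}\cup\{1\}$ is at most $\delta|C|\sum_{g}|A\cap Ag|=\delta|C||A|^{2}$, which fits the budget (absorbing constants into $\varepsilon$, and treating small $|C|$ via Theorem~\ref{th:boundr} or Proposition~\ref{pr:smallgrow}).

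What remains is the popular part $\sum_{g\in\mathcal P_{\delta}}|A\cap Ag|\,\rho(g)$, and this is the crux: the trivial bound $|A\cap Ag|\le|A|$ gives only $\le|A||C|^{2}$, which is useless, so one must exploit that $|A\cap Ag|$ is large only for $g$ in the (approximate) stabilizer of $A$, together with the structure of $\mathcal P_{\delta}$. This is where the three hypotheses are combined. Since $G$ is simple, a proper nonempty normal set has trivial stabilizer $\{g:gC=C\}=\{1\}$: such a $g$ satisfies $g^{h}C=C$ for all $h$ (conjugate and use normality), so the normal subgroup $\langle g^{G}\rangle$, which equals $G$ if $g\ne1$, stabilizes $C$, forcing $C=G$. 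And since $|C|\le|G|^{\eta}<|G|^{1/b}$, Theorem~\ref{th:ptc} applied to the normal set $C$ (for which $CC^{g}=C^{2}$) forces $|C^{2}|\ge|C|^{1+\tau}$, and on iterating $C$ is far from being ``approximately closed''. I would use these facts to rule out that $\mathcal P_{\delta}$ is simultaneously large and aligned with the coset structure of a small subgroup; the extremal case is $A$ essentially a left coset of a subgroup $S$ with $|S|\le|G|^{\eta}$, where the popular sum is governed by $|SC|=|AC|$ itself, and one closes the loop by showing directly that $|SC|\ge|S||C|^{1-\varepsilon}$ --- otherwise $S$ would contain a subset of $C$ of size nearly $|C|$, so (again by simplicity, via $\langle C\cap S\rangle$ and its normal closure) $C$ would be essentially contained in a proper subgroup, contradicting both $|C|\le|G|^{\eta}$ and the expansion of $C$.

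I expect this last step --- ruling out that a small proper normal set is ``concentrated in cosets of a small subgroup'' --- to be the main obstacle, and it is precisely the point where all of \emph{normal}, \emph{small}, and \emph{$G$ simple} are genuinely needed: dropping any one makes the statement false, as shown by $B$ a proper subgroup of an abelian group, or by $B=G$.
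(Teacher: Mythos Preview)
The paper does not prove Theorem~\ref{th:dmp}; it is quoted from \cite{DMP24}. As the Remark following the statement explains, the argument there (for Lie type) combines a non-abelian Pl\"unnecke inequality \cite[Thm.~4.1]{GPSS13} with structural estimates on centralizers of elements having small conjugacy class, coming from \cite{LS12} via \cite{LSS17}. Roughly: Pl\"unnecke turns a putative bound $|AB|\le K|A|$ into $|A'B^{k}|\le K^{k}|A'|$ for some large $A'\subseteq A$, and the centralizer estimates force $|B^{k}|$ to grow fast enough that $K$ cannot be small. The Remark also sketches, for $A$ and $B$ both single classes, a bare-hands construction via Jordan block data of an element of $\mathrm{Cl}(x_{1})\mathrm{Cl}(x_{2})$ with suitably small centralizer. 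None of this goes through multiplicative energy.

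Your energy setup is correct through the decomposition into the identity term, the unpopular part, and the popular part, and you are right that the whole difficulty lives in $\sum_{g\in\mathcal P_{\delta}}|A\cap Ag|\,\rho(g)$. But the sketch you give for this step does not close the gap, and in fact is circular. Passing to the extremal case $A\approx xS$ with $S$ a subgroup reduces the problem to $|SC|\ge|S||C|^{1-\varepsilon}$, which is exactly the original statement specialized to $A=S$. Your proposed way out---``otherwise $S$ would contain a subset of $C$ of size nearly $|C|$''---does not follow: $|SC|$ small only tells you that $C$ meets few right cosets of $S$, so some coset $Sx$ has $|C\cap Sx|$ large, but $(C\cap Sx)x^{-1}$ lies in $Cx^{-1}$, not in $C$, and there is no reason $|C\cap S|$ itself is large. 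Even granting that, $C\cap S$ is not a normal subset of $G$, so the normal-closure argument you invoke does not apply. The obstacle you flag is real, and the known proofs bypass it via the centralizer input rather than by a pure second-moment bound.

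A minor point: your reduction to a single class via Proposition~\ref{pr:fewclass} is specific to Lie type, whereas the theorem is stated for all finite simple non-abelian $G$.
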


\begin{remark}
The result in \cite{DMP24} for groups of Lie type is proved using both an estimate of the size of small conjugacy classes of $G$, based on \cite[\S\S 3-4]{LS12} through \cite[\S 3]{LSS17}, and a non-abelian version of Pl\"unnecke's inequality that appeared in \cite[Thm.~4.1]{GPSS13}. One can prove a special case of Theorem~\ref{th:dmp} for groups of unbounded rank and for $A,B$ both conjugacy classes by using only the size estimates. The proof is more computationally cumbersome, but it relies on a nice structural resemblance between the size of $G$ and the size of a centralizer that may be of interest to the reader, and the case of conjugacy classes is all we need for this paper. 

Call $n_{i}(x,\lambda)$ the number of Jordan blocks $J_{i}(\lambda)$ of eigenvalue $\lambda\in\overline{\mathbb{F}_{q}}$ and size $i\geq 1$ appearing in the Jordan form of $x\in\mathrm{GL}_{n}(q)$. In particular, $n=\sum_{\lambda}\sum_{i}in_{i}(x,\lambda)$. Using for instance \cite[p.~3054]{FG12} for $\mathrm{SL}_{n}$ and \cite[pp.~34-36-39]{Wal63} for $\mathrm{SU}_{n},\mathrm{Sp}_{n},\mathrm{SO}_{n}^{\pm}$, one shows that $|G|\sim_{n}q^{\dim(G)}$ and $|C_{G}(x)|\sim_{n} q^{m}$ (hiding constants depending only on $n$) with
\begin{center}
\def\arraystretch{1.3}
\begin{tabular}{|c||c|c|}
\hline
$G$ & $\dim(G)$ & $m$ \\ \hline\hline
$\mathrm{SL}_{n}(q),\mathrm{SU}_{n}(q)$ & $n^{2}-1$ & $R-1$ \\ \hline
$\mathrm{Sp}_{n}(q)$ & $\frac{1}{2}(n^{2}+n)$ & $\frac{1}{2}(R+S)$ \\ \hline
$\mathrm{SO}_{n}^{\pm}(q)$ & $\frac{1}{2}(n^{2}-n)$ & $\frac{1}{2}(R-S)$ \\ \hline
\end{tabular}
\end{center}
where
\begin{align*}
R & =\sum_{\lambda}\sum_{i}\left(in_{i}(x,\lambda)^{2}+2\sum_{i'>i}in_{i}(x,\lambda)n_{i'}(x,\lambda)\right), & S & =\sum_{\lambda=\pm 1}\sum_{\text{$i$ odd}}n_{i}(x,\lambda).
\end{align*}
Then, for any $x$ having a small class, there is a special eigenvalue $\lambda_{x}$ for which $n_{1}(x,\lambda_{x})\geq\frac{1}{2}n$, and for any two such $x_{1},x_{2}$ there is $y\in\mathrm{Cl}(x_{1})\mathrm{Cl}(x_{2})$ with $\lambda_{y}=\lambda_{x_{1}}\lambda_{x_{2}}$ and
\begin{align*}
n_{1}(y,\lambda_{y}) & =n_{1}(x_{1},\lambda_{x_{1}})+n_{1}(x_{2},\lambda_{x_{2}})-n, & & \\
n_{i}(y,\lambda) & =n_{i}(x_{1},\lambda\lambda_{x_{2}}^{-1})+n_{i}(x_{2},\lambda\lambda_{x_{1}}^{-1}) & & \text{for $(\lambda,i)\neq(\lambda_{y},1)$.}
\end{align*}
Putting appropriately together all the estimates, the conclusion is that $|\mathrm{Cl}(y)|\gtrsim_{n}|\mathrm{Cl}(x_{1})||\mathrm{Cl}(x_{2})|^{1-\varepsilon}$ for $|\mathrm{Cl}(x_{1})|,|\mathrm{Cl}(x_{2})|\leq|G|^{\eta}$. The dependence is also explicit: taking $\eta=\frac{\varepsilon}{100}$ is more than enough.
\end{remark}

\section{The case $|X|\geq|C|^{\delta}$}\label{se:xcldelta}

We tackle separately the case of a subset $X$ of a conjugacy class $C$ for which $|X|\geq|C|^{\delta}$. This situation is particularly favourable, with a resulting growth compatible even with Conjecture~\ref{co:conjlns} itself. We shall use the conclusion of this section in the main proof, imposing quite arbitrarily $\delta=\frac{1}{4}$.

The idea is quite simple. Since we know that by Theorem~\ref{th:dmp} we can achieve fast growth using a normal subset, such as the class $C$, we only need to show that our large set $X$ inherits this behaviour from $C$. To do so, we use elementary combinatorial lemmas to prove that there is a rather sparse cover of $C$ by conjugates of $X$; then, since the cover is sparse, at least one of the conjugates must grow fast as well.

We start by proving that we can take any two sets in $C$ and separate them enough by conjugation.

\begin{lemma}\label{le:piececl}
Let $G$ be a finite group, and let $C\subseteq G$ be a conjugacy class. Then, for any two proper subsets $S,T\subsetneq C$, there is some $g\in G$ with $|S^{g}\cap T|\leq\min\left\{|S|-1,|S|\frac{|T|}{|C|}\right\}$.
\end{lemma}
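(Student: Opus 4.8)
The plan is to prove the bound $|S^g \cap T| \le \min\{|S|-1,\, |S|\frac{|T|}{|C|}\}$ by averaging over $g \in G$. The natural quantity to control is the average size of $S^g \cap T$ as $g$ ranges over the group. For a fixed element $s \in S$ and $t \in T$, the number of $g \in G$ for which $s^g = t$ is either $0$ (if $s, t$ are not conjugate, which cannot happen here since both lie in $C$) or exactly $|C_G(s)| = |G|/|C|$. Summing over all pairs $(s,t) \in S \times T$, we get $\sum_{g \in G} |S^g \cap T| = |S|\,|T|\,\frac{|G|}{|C|}$, hence the average value of $|S^g \cap T|$ over $g \in G$ is $|S|\frac{|T|}{|C|}$. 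Therefore there exists $g$ with $|S^g \cap T| \le |S|\frac{|T|}{|C|}$; since $|S^g \cap T|$ is an integer, we even get $|S^g \cap T| \le \lfloor |S|\frac{|T|}{|C|}\rfloor$, which suffices for the first part of the minimum whenever $|T| < |C|$.

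For the bound $|S^g \cap T| \le |S|-1$, I would argue that it cannot be that $|S^g \cap T| = |S|$ for \emph{every} $g$, i.e. that no conjugate of $S$ lands entirely inside $T$ — indeed if some conjugate of $S$ misses $T$ at all we are done, so suppose instead $S^g \subseteq T$ for all $g$. Since $T \subsetneq C$ is a proper subset, fix $x \in C \setminus T$. Pick any $s \in S$ (using $S \ne \emptyset$, which holds as $S \subsetneq C$ with $C$ a class containing at least $|S|+1 \ge 1$ elements — here we should note $S$ nonempty is needed; if $S = \emptyset$ the statement is trivial). Then there is $g \in G$ with $s^g = x$, so $x \in S^g$ but $x \notin T$, contradicting $S^g \subseteq T$. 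Hence some $g$ gives $|S^g \cap T| \le |S|-1$. Combining the two arguments — and observing that when $|T| < |C|$ the averaging bound already gives something $< |S|$, while the real content of the first bound is to handle the regime where we want a clean integer estimate — we should be slightly careful to produce a \emph{single} $g$ achieving the minimum; but note that in fact the minimum is achieved by taking whichever bound is smaller, and the averaging argument alone gives $|S^g \cap T| \le \lfloor |S| |T|/|C| \rfloor$ which, since $|T| \le |C|-1$, is at most $|S| - \lceil |S|/|C| \rceil \le |S|-1$. So a single application of averaging delivers both bounds simultaneously.

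The only mild subtlety, and the thing to state carefully rather than a genuine obstacle, is the double-counting identity $\sum_{g\in G}|S^g\cap T| = \sum_{s\in S}\sum_{t\in T} \#\{g : s^g = t\}$ together with the fact that for $s,t$ in the same conjugacy class $\#\{g : s^g = t\} = |C_G(s)|$, independent of the choice of $s$ within $C$ (all centralizers of elements in one class have the same order). This is completely standard, so I expect no real difficulty; the proof is short. I would write it as: establish the counting identity, deduce the average equals $|S||T|/|C|$, pick $g$ below average, and note integrality plus $|T|\le|C|-1$ forces the value to be at most $|S|-1$ as well.
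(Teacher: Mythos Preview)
Your proposal is correct and follows essentially the same averaging/double-counting argument as the paper: both compute $\sum_{g\in G}|S^{g}\cap T|=|S||T||G|/|C|$ and pick a $g$ below the average to get the bound $|S||T|/|C|$, with the paper handling the $|S|-1$ bound separately by the same transitivity argument you sketch. Your remark that integrality plus $|T|<|C|$ already forces the below-average $g$ to satisfy $|S^{g}\cap T|\le|S|-1$ as well is a small streamlining not made explicit in the paper, but the core idea is identical.
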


\begin{proof}
Since $T$ is properly contained in $C$, which is the orbit of a point under the action of conjugation, there is some $g\in G$ with $S^{g}\subsetneq T$, meaning that $|S^{g}\cap T|\leq|S|-1$.

Assume now that $|S^{g}\cap T|>|S|\frac{|T|}{|C|}$ for all $g\in G$. For every $x\in C$, the number of $g$ such that $x\in S^{g}$ is $\frac{|G||S|}{|C|}$. Therefore
\begin{equation*}
\frac{|G||S|}{|C|}|T|=|\{(x,g)\in T\times G|x\in S^{g}\}|=\sum_{g\in G}|S^{g}\cap T|>|S|\frac{|T|}{|C|}|G|,
\end{equation*}
which is absurd. Thus, there must be some $g$ for which $|S^{g}\cap T|\leq|S|\frac{|T|}{|C|}$.
\end{proof}

Using the separation lemma above, we find a cover of $C$ made of conjugate sets. The cover is sparse in the sense that it is only a logarithmic factor away from the best possible cover, i.e.\ a partition.

\begin{lemma}\label{le:covercl}
Let $G$ be a finite group, and let $C\subseteq G$ be a conjugacy class. Then, for any subset $S\subseteq C$ with $|S|\geq 2$, there is some set $H\subseteq G$ with $|H|\leq\frac{6|C|\log|S|}{|S|}$ such that $\bigcup_{h\in H}S^{h}=C$.
\end{lemma}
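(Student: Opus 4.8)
The plan is to build the cover greedily, using Lemma~\ref{le:piececl} at each step to guarantee that a new conjugate of $S$ covers a fixed positive proportion of the part of $C$ not yet covered. Concretely, I would construct a sequence of conjugating elements $h_1, h_2, \ldots$ as follows. Set $T_0 = C$. Having chosen $h_1, \ldots, h_i$ and written $T_i = C \setminus \bigcup_{j=1}^i S^{h_j}$, if $T_i = \varnothing$ we stop; otherwise $T_i$ is a nonempty subset of $C$, and I would like to pick $h_{i+1}$ so that $S^{h_{i+1}}$ covers many new points of $T_i$, i.e.\ so that $|S^{h_{i+1}} \cap T_i|$ is large. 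Lemma~\ref{le:piececl} as stated gives an \emph{upper} bound on $|S^g \cap T|$, so what I actually want is the complementary averaging statement: by the same counting identity $\sum_{g \in G} |S^g \cap T_i| = \frac{|G||S|}{|C|}|T_i|$, the average value of $|S^g \cap T_i|$ over $g \in G$ is exactly $|S|\frac{|T_i|}{|C|}$, so there exists $g = h_{i+1}$ with $|S^{h_{i+1}} \cap T_i| \geq |S|\frac{|T_i|}{|C|}$.

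With such a choice, the uncovered set shrinks geometrically: $|T_{i+1}| = |T_i| - |S^{h_{i+1}} \cap T_i| \leq |T_i|\left(1 - \frac{|S|}{|C|}\right)$. Hence after $k$ steps $|T_k| \leq |C|\left(1 - \frac{|S|}{|C|}\right)^k \leq |C| e^{-k|S|/|C|}$. Choosing $k$ so that this bound drops below $1$ — which happens once $k \geq \frac{|C|}{|S|}\log|C|$ — forces $T_k = \varnothing$, i.e.\ $H = \{h_1, \ldots, h_k\}$ covers $C$. The one nuisance is that the target bound in the statement is $\frac{6|C|\log|S|}{|S|}$, with $\log|S|$ rather than $\log|C|$; I would get around this by noting that $|C| \leq |G| \leq q^{8r^2}$ and $|S| \geq 2$, but more cleanly by refining the stopping argument: run the geometric decrease only until $|T_k| \leq |S|$ (which needs $k \geq \frac{|C|}{|S|}\log\frac{|C|}{|S|}$ steps), and then finish off the remaining at most $|S|$ points one at a time. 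For the final cleanup, each remaining point $x \in T_k$ lies in $C$, and since $S \neq \varnothing$ there is a conjugate of $S$ containing $x$ (as $C$ is a single conjugacy class, every point of $C$ is covered by some $S^g$ for any fixed nonempty $S \subseteq C$); this adds at most $|S|$ more elements to $H$.

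It remains to check the arithmetic that $\frac{|C|}{|S|}\log\frac{|C|}{|S|} + |S| \leq \frac{6|C|\log|S|}{|S|}$. Writing $t = |C|/|S| \geq 1$, the left side is $t\log t + |S|$ and $|S| \leq |C|/|S| \cdot |S|^2/|C|$... more simply, since $|S| \leq |C|$ we have $|S| \leq |C|/|S| \cdot 1 \le t$, wait that needs $|S|^2 \le |C|$; instead just bound $|S| \le |C|/2 \cdot (2/|S|)\le$ — cleanest is: $|S| \leq t$ fails in general, so I would instead keep $\log|C|$ in the intermediate bound and use $\log|C| \leq \log|S| + \log\frac{|C|}{|S|}$ together with a crude bound like $\log\frac{|C|}{|S|} \le \frac{|C|}{|S|}$, or simply absorb everything into the constant $6$ using $|S| \ge 2$ so $\log|S| \ge \log 2$ and $\log|C| \le \log|S|^{\log|C|/\log 2}$. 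This constant-chasing is the only fiddly part; the structural content — greedy covering plus the averaging lower bound dual to Lemma~\ref{le:piececl} — is straightforward, and the main (very mild) obstacle is making the bookkeeping land on the clean form $\frac{6|C|\log|S|}{|S|}$ rather than something with $\log|C|$ in it.
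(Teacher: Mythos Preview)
Your greedy-covering approach via the averaging identity $\sum_{g}|S^{g}\cap T_{i}|=\frac{|G||S|}{|C|}|T_{i}|$ is exactly the mechanism the paper uses (the paper phrases it through Lemma~\ref{le:piececl} applied to the \emph{covered} set, but this is the same inequality read from the other side). The dyadic bookkeeping in the paper and your geometric-decrease bookkeeping are equivalent analyses of the same process.

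The gap is in your switch-over threshold. You propose to run the geometric phase until the uncovered set $T_{k}$ has size at most $|S|$, which costs $\frac{|C|}{|S|}\log\frac{|C|}{|S|}$ steps, and then spend at most $|S|$ further steps. But $\frac{|C|}{|S|}\log\frac{|C|}{|S|}+|S|$ is \emph{not} bounded by $\frac{6|C|\log|S|}{|S|}$ in general: take $|S|=2$ and $|C|$ large, so the left side is $\sim\frac{|C|}{2}\log|C|$ while the right side is $\sim 3|C|\log 2$. This is why your constant-chasing at the end cannot be made to close; it is not a bookkeeping nuisance but a genuine mismatch.

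The fix is to change the threshold: run the geometric phase until $|T_{k}|\leq\frac{|C|}{|S|}$, which from $|C|e^{-k|S|/|C|}\leq\frac{|C|}{|S|}$ requires only $k\geq\frac{|C|}{|S|}\log|S|$ steps, and then cover the remaining $\leq\frac{|C|}{|S|}$ points one at a time. The total is then $\left\lceil\frac{|C|}{|S|}\log|S|\right\rceil+\left\lceil\frac{|C|}{|S|}\right\rceil\leq\frac{6|C|\log|S|}{|S|}$, which is precisely the paper's count (the paper reaches the same threshold by halving the uncovered part $\lceil\log_{2}|S|\rceil$ times, each halving costing $\lceil|C|/|S|\rceil$ steps). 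Incidentally, your first instinct---running all the way to $|T_{k}|<1$ for a bound of $\frac{|C|\log|C|}{|S|}$---is noted in the paper's remark as the outcome of the probabilistic argument, weaker than the lemma as stated but still adequate for the downstream applications.
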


\begin{proof}
We are going to build $H$ one element at a time: let $H_{i}$ be the set we have reached at the $i$-th step, and set $T_{i}=\bigcup_{h\in H_{i}}S^{h}$. Start with $H_{0}=T_{0}=\emptyset$; we stop when $T_{i}=C$.

At each step (assuming that $T_{i}\subsetneq C$), let $k$ be the unique positive integer such that $\left(1-\frac{1}{2^{k-1}}\right)|C|\leq|T_{i}|<\left(1-\frac{1}{2^{k}}\right)|C|$. By Lemma~\ref{le:piececl}, there is some $h_{i+1}$ with $|S^{h_{i+1}}\setminus T_{i}|\geq\frac{|S|}{2^{k}}$. Thus, if $H_{i+1}=H_{i}\cup\{h_{i+1}\}$, we get $|T_{i+1}|\geq T_{i}+\frac{|S|}{2^{k}}$. This means that, if $|T_{i}|\geq\left(1-\frac{1}{2^{k-1}}\right)|C|$, then $|T_{i+t}|\geq\left(1-\frac{1}{2^{k}}\right)|C|$ for $t=\left\lceil\frac{|C|}{|S|}\right\rceil$.

Hence, we have $|T_{a}|\geq|C|-\frac{|C|}{|S|}$ for some $a\leq\left\lceil\frac{|C|}{|S|}\right\rceil\cdot\lceil\log_{2}|S|\rceil$. By Lemma~\ref{le:piececl} again, for every $T_{i}\subsetneq C$ there is some $h_{i+1}$ with $|S^{h_{i+1}}\setminus T_{i}|\geq 1$. Thus, $T_{a+b}=C$ for some $b\leq\left\lceil\frac{|C|}{|S|}\right\rceil$.

We have reached our goal, and we set $H=H_{a+b}$. Since $|H|=a+b$, in the end
\begin{equation*}
|H|\leq \left\lceil\frac{|C|}{|S|}\right\rceil\cdot(\lceil\log_{2}|S|\rceil+1)\leq\frac{6|C|\log|S|}{|S|}
\end{equation*}
as we wanted.
\end{proof}

\begin{remark}
As pointed out by S.~Skresanov (personal communication), the nontrivial part of Lemma~\ref{le:piececl} is already in \cite[Lemma 2]{DeV23}, which in turn is a variation of old counting arguments by Babai \cite{Bab80c}. He also remarked that a much shorter probabilistic argument already gives $|H|\leq\left\lfloor\frac{|C|\log|C|}{|S|}\right\rfloor+1$ in Lemma~\ref{le:covercl}: for a fixed $x$ and random $h_{1},\ldots,h_{t}$ the probability that $x$ does not lie in $S^{h_{1}},\ldots,S^{h_{t}}$ is bounded by $e^{-t|S|/|C|}$, and it suffices to take $t>\frac{|C|\log|C|}{|S|}$ to have $|C|e^{-t|S|/|C|}<1$. The resulting bound for $|H|$ is weaker than in Lemma~\ref{le:covercl}, but not enough to affect our future arguments.
\end{remark}

Combining the cover lemma above with the growth result of Theorem~\ref{th:dmp}, we obtain the result we want for $|X|\geq|C|^{\delta}$.

\begin{proposition}\label{pr:xcldelta}
Fix $0<\delta\leq 1$ and $\eta=\eta\left(\frac{\delta}{10}\right)$, where the function $\eta$ is as in Theorem~\ref{th:dmp}. Let $G$ be a finite simple group of Lie type of rank $r$ over $\mathbb{F}_{q}$, let $C\subseteq G$ be a nontrivial conjugacy class with $q^{10r/\delta}\leq|C|\leq|G|^{\eta}$, and let $X\subseteq C$ with $|X|\geq|C|^{\delta}$. Then $G$ is the product of at most $N\frac{\log|G|}{\log|X|}$ conjugates of $X$, where $N$ depends only on $\delta$.
\end{proposition}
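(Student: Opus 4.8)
The plan is to bootstrap the fast growth of the normal set $C$ (via Theorem~\ref{th:dmp}) up to $X$ by exploiting the sparse cover from Lemma~\ref{le:covercl}. First I would use Lemma~\ref{le:covercl} to write $C=\bigcup_{h\in H}X^{h}$ with $|H|\leq\frac{6|C|\log|X|}{|X|}\leq\frac{6|C|\log|C|}{|C|^{\delta}}$, using $|X|\geq|C|^{\delta}$. Since $|C|\leq|G|^{\eta}$ and products of $X$-conjugates live inside products of $C$'s, I want to first grow $C$ itself: by Theorem~\ref{th:dmp} with parameter $\tfrac{\delta}{10}$, as long as the current product stays below $|G|^{\eta}$ we have $|C^{(k)}|:=|C\cdot C\cdots C|\,(k\text{ times})\geq|C|^{k(1-\delta/10)}$ roughly, so a bounded number (depending on $\delta$) of multiplications by $C$ brings us past $|G|^{\eta}$, and then Theorem~\ref{th:ptc} (or rather the machinery behind \eqref{eq:boundgpss}, Theorem~\ref{th:boundr} being irrelevant since $r$ may be large) finishes the climb to $|G|$ in $O(1)$ further steps. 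The point is that $G$ is a product of $O_{\delta}(1)$ conjugates of $C$.

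The key step is to convert a product of conjugates of $C$ into a product of conjugates of $X$ without losing more than a constant factor in the exponent $\frac{\log|G|}{\log|X|}$. Each factor $C$ in a product $C^{g_1}C^{g_2}\cdots C^{g_s}$ can be replaced by $\bigcup_{h\in H}X^{hg_j}$; expanding the union, $C^{g_1}\cdots C^{g_s}=\bigcup X^{h_1g_1}X^{h_2g_2}\cdots X^{h_sg_s}$ over all choices $h_i\in H$. Since this union equals $G$, by pigeonhole at least one of the summands $X^{h_1g_1}\cdots X^{h_sg_s}$ — a product of exactly $s$ conjugates of $X$ — has size $\geq\frac{|G|}{|H|^{s}}$. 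But a product of $s$ conjugates of $X$ has size at most $|X|^{s}$, so this gives no contradiction; instead I should iterate the standard covering trick: a product of $s$ conjugates of $X$ that has size $\geq\frac{|G|}{|H|^{s}}$ can itself be used (it is large, so it covers $G$ in few translates) — more cleanly, I would argue that $\frac{|G|}{|H|^{s}}\geq|G|^{1-o(1)}$ is \emph{not} automatic, so instead the right move is:\ a set $Y$ of size $\geq\frac{|G|}{|H|^{s}}$ satisfies $G=Y\cdot g$ for suitably many $g$, and in fact $G$ is a product of $O(\frac{\log|G|}{\log(|G|/|H|^{s})})$ translates of $Y$ — but translates are not conjugates. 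The correct and cleanest route is: choose $s=O_\delta(1)$ so that $|H|^{s}\leq|G|^{1/2}$ say (possible because $|H|\leq 6|C|^{1-\delta}\log|C|\leq|C|^{1-\delta/2}\leq|G|^{\eta(1-\delta/2)}$, and $s\eta(1-\delta/2)<1/2$ for $s$ a fixed function of $\delta$), so that the pigeonhole summand $Y=X^{h_1g_1}\cdots X^{h_sg_s}$ has $|Y|\geq|G|^{1/2}$; then $Y$ is a product of $s=O_\delta(1)$ conjugates of $X$ with $|Y|\geq|G|^{1/2}$, and one more application of Theorem~\ref{th:ptc} (case~(2) must hold since $|YY^{g}|\leq|G|<|Y|^{1+\tau}$ once $|Y|>|G|^{1/(1+\tau)}$, which we can arrange by taking $|Y|\geq|G|^{1-\tau}$ via a larger fixed $s$) shows $G$ is a product of $\leq b$ conjugates of $Y$, hence of $\leq bs=O_\delta(1)$ conjugates of $X$.

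I should double-check the arithmetic linking $|C|\geq q^{10r/\delta}$ to the logarithmic loss: we have $\log|X|\geq\delta\log|C|$ and $\log|G|\leq 8r^{2}\log q$ by Proposition~\ref{pr:qnclass}, while $\log|C|\geq\frac{10r}{\delta}\log q$, so $\frac{\log|G|}{\log|X|}\leq\frac{8r^{2}\log q}{\delta\cdot\frac{10r}{\delta}\log q}=\frac{4r}{5}$; this is not bounded, but it is irrelevant — the number of conjugates of $X$ we produced is the \emph{absolute} constant $O_\delta(1)$, which is trivially $\leq N\frac{\log|G|}{\log|X|}$ once $N$ absorbs it (and $\frac{\log|G|}{\log|X|}\geq 1$ always since $|X|\leq|G|$). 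So the final bound $N\frac{\log|G|}{\log|X|}$ with $N=N(\delta)$ holds with room to spare.

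The main obstacle I anticipate is the bookkeeping of which threshold is in force when: Theorem~\ref{th:dmp} applies only below $|G|^{\eta}$, Theorem~\ref{th:ptc} needs the target just below $|G|$, and the pigeonhole-after-covering step needs $|H|^{s}$ genuinely smaller than the size we are aiming for — so I must verify that the constant $s=s(\delta)$ can be chosen (independently of $q,r$) to simultaneously (i) push $|C|$ from $|G|^{\eta}$ down... wait, up past the Theorem~\ref{th:dmp} ceiling using only $C$-conjugates, (ii) keep $|H|^{s}$ below $|G|^{1-\tau}$ so Theorem~\ref{th:ptc}(2) is forced on the pigeonhole summand, and (iii) not blow up when we re-apply Theorem~\ref{th:ptc} at the very end. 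All three are linear inequalities in $s$ with coefficients depending only on $\delta$ (through $\eta(\delta/10)$ and the absolute $\tau,b$), so a valid $s$ exists; writing this out carefully is the only delicate part.
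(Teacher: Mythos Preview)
Your argument has a genuine gap: the claim that $G$ is a product of $s=O_{\delta}(1)$ copies of $C$, and hence that some product of $s=O_{\delta}(1)$ conjugates of $X$ has size $\geq|G|^{1/2}$, is impossible. A product of $s$ conjugates of $X$ has size at most $|X|^{s}$, so $|Y|\geq|G|^{1/2}$ forces $s\geq\frac{\log|G|}{2\log|X|}$; since $|X|\leq|C|\leq|G|^{\eta}$ this is at least $\frac{1}{2\eta}$, but more to the point $|C|$ may be as small as $q^{10r/\delta}$ while $|G|$ may be as large as $q^{8r^{2}}$, so $\frac{\log|G|}{\log|X|}$ (and hence $s$) grows with $r$. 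The same counting shows your ``all three are linear inequalities in $s$'' claim cannot hold: condition~(i) forces $s\gtrsim\frac{\log|G|}{\log|C|}$, and with the tools available (Theorem~\ref{th:dmp} only up to $|G|^{\eta}$, then Theorem~\ref{th:ptc} with its doubling cost $2^{\log(1/\eta)/\log(1+\tau)}$) the resulting $s$ makes $|H|^{s}$ far exceed $|G|$, so the global pigeonhole $|Y|\geq|G|/|H|^{s}$ is vacuous.

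The paper repairs exactly this by doing the cover-and-pigeonhole \emph{locally at each step} rather than once at the end. One maintains $Y_{i}\subseteq C_{i}$ with $Y_{i}$ a product of $i$ conjugates of $X$; at step $i$ one uses Theorem~\ref{th:dmp} to find a single class $C_{i+1}\subseteq C_{i}C$ with $|C_{i+1}|\geq|C_{i}||C|^{1-\delta/2}$, covers $C_{i}$ by conjugates of $Y_{i}$ and $C$ by conjugates of $X$ via Lemma~\ref{le:covercl}, and pigeonholes inside $C_{i+1}$ to get $Y_{i+1}\subseteq X^{g_{1}}Y_{i}^{g_{2}}$ with $|Y_{i+1}|\geq|Y_{i}|\cdot|X|^{1/3}$. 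The key is that the per-step cover loss $\frac{|C||C_{i}|}{|X||Y_{i}|}$ is beaten by the per-step Theorem~\ref{th:dmp} gain $|C|^{1-\delta/2}$, leaving net growth $|X|^{1/3}$; this balance is destroyed if you compound the losses over all steps before cashing in the gain.
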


\begin{proof}
If $r\leq 100$ the result follows from Theorem~\ref{th:boundr}, so assume that $r>100$. We construct a sequence of sets $Y_{i}$ and conjugacy classes $C_{i}$ such that
\begin{enumerate}[(1)]
\item\label{pr:xcldelta-ycl} $Y_{i}\subseteq C_{i}$,
\item\label{pr:xcldelta-yprod} $Y_{i}$ is contained in the product of $i$ conjugates of $X$,
\item\label{pr:xcldelta-ygrow} $|Y_{i}|\geq|X|^{\frac{1}{3}i}$, and
\item\label{pr:xcldelta-clgrow} $|C_{i}|\leq|C|^{i}$.
\end{enumerate}
We are going to show that such a sequence exists, and that it does not terminate before we reach a step $k$ for which $|Y_{k}|>|G|^{\frac{1}{3}\delta\eta}$.

At the beginning, $Y_{1}=X$ and $C_{1}=C$. Now assume that the properties above hold up to the $i$-th step, and that $|Y_{i}|\leq|G|^{\frac{1}{3}\delta\eta}$: we prove that we can perform the $(i+1)$-th step. First of all, by \eqref{pr:xcldelta-ygrow} and \eqref{pr:xcldelta-clgrow},
\begin{equation*}
|G|^{\frac{1}{3}\delta\eta}\geq|Y_{i}|\geq|X|^{\frac{1}{3}i}\geq|C|^{\frac{1}{3}\delta i}\geq|C_{i}|^{\frac{1}{3}\delta},
\end{equation*}
so $|C|,|C_{i}|\leq|G|^{\eta}$. The product $C_{i}C$ is a normal set for which $|C_{i}C|\geq|C_{i}||C|^{1-\frac{\delta}{10}}$ by Theorem~\ref{th:dmp}, so using Proposition~\ref{pr:fewclass} there must be a class $C_{i+1}\subseteq C_{i}C$ for which
\begin{equation*}
|C_{i+1}|\geq\frac{|C_{i}||C|^{1-\frac{\delta}{10}}}{q^{4r}}\geq|C_{i}||C|^{1-\frac{\delta}{2}}.
\end{equation*}
Since $C_{i+1}\subseteq C_{i}C$, \eqref{pr:xcldelta-clgrow} is trivially guaranteed for $i+1$ as well. By Lemma~\ref{le:covercl} there are two sets $H,K$ such that
\begin{align*}
|H| & \leq\frac{6|C|\log|X|}{|X|}, & \bigcup_{g_{1}\in H}X^{g_{1}} & =C, \\
|K| & \leq\frac{6|C_{i}|\log|Y_{i}|}{|Y_{i}|}, & \bigcup_{g_{2}\in K}Y_{i}^{g_{2}} & =C_{i},
\end{align*}
therefore
\begin{equation*}
\bigcup_{g_{1}\in H}\bigcup_{g_{2}\in K}(X^{g_{1}}Y_{i}^{g_{2}}\cap C_{i+1})=CC_{i}\cap C_{i+1}=C_{i+1}.
\end{equation*}
So there must be one choice of $(g_{1},g_{2})\in H\times K$ such that
\begin{align*}
|X^{g_{1}}Y_{i}^{g_{2}}\cap C_{i+1}| & \geq\frac{|C_{i+1}|}{|H||K|}\geq\frac{|C|^{1-\frac{\delta}{2}}|C_{i}||X||Y_{i}|}{36|C||C_{i}|\log|X|\log|Y_{i}|}=\frac{|X||Y_{i}|}{36|C|^{\frac{\delta}{2}}\log|X|\log|Y_{i}|} \\
& \geq\frac{|X|^{\frac{1}{2}}|Y_{i}|}{36\log|X|\log|Y_{i}|}\geq\frac{|X|^{\frac{1}{2}}|Y_{i}|}{36(\log|G|)^{2}}\geq|X|^{\frac{1}{3}}|Y_{i}|,
\end{align*}
where the last inequality uses Proposition~\ref{pr:qnclass}, $|X|\geq|C|^{\delta}\geq q^{10r}$, and $r>100$. Taking $Y_{i+1}=X^{g_{1}}Y_{i}^{g_{2}}\cap C_{i+1}$, we have \eqref{pr:xcldelta-ycl} and \eqref{pr:xcldelta-yprod} for $i+1$ by definition, and \eqref{pr:xcldelta-ygrow} by what we said above. The induction is concluded.

At the final step $k$, we have finally a set $Y_{k}$ contained in the product of $k$ conjugates of $X$ and with $|Y_{k}|>|G|^{\frac{1}{3}\delta\eta}$. Using Theorem~\ref{th:ptc} repeatedly, $G$ is the product of at most $N_{1}$ conjugates of $Y_{k}$ with
\begin{equation*}
N_{1}=2^{\left\lceil\frac{\log(3/\delta\eta)}{\log(1+\tau)}\right\rceil}b.
\end{equation*}
In turn, since $|Y_{k}|\geq|X|^{\frac{1}{3}k}$, $Y_{k}$ is the product of at most $3\frac{\log|Y_{k}|}{\log|X|}$ conjugates of $X$. The result then holds with $N=3N_{1}$.
\end{proof}

\section{The main theorem}\label{se:main}

We move to the proof of the main theorem.

The idea is essentially the following (we simplify the description avoiding technicalities, so as to capture the spirit of the proof at the expense of precision). Thanks to \S\ref{se:xcldelta}, we already know what happens when our set occupies a large part of a conjugacy class, so we need to deal with the converse case. Whenever we have a set $S$ whose intersection with every class is small, there is a conjugate $S^{g}$ whose intersection with $S$ is empty. Then growth is achieved by standard tricks in map counting, similar to \cite[Lemma 2.2]{Hel19b} for instance. We then pass from $S$ to $SS^{g}$, and repeat the process, checking whether the new set has large intersection with a class or not and so on.

It is at this point that we fail to reach Conjecture~\ref{co:conjlns} and we have to settle for the exponent $1+\varepsilon$ in Theorem~\ref{th:main}. The dichotomy between the case of \S\ref{se:xcldelta} and its converse relies on the fact that the set and the class are not too far apart in size. To satisfy this condition at every step, we need to get rid of some of the elements of each new $S$, in order to avoid that the class with large intersection is too small. Thus, in the statement of Lemma~\ref{le:xclsmall} there is a discrepancy between the fact that $X'$ is contained in the product of two conjugates of $X$ and the fact that its size is $|X|^{2-\varepsilon}$ in the worst case; this discrepancy comes from the denominator $|T|+1$ in Lemma~\ref{le:cutsmall}, and in turn it leads in the end to the exponent $1+\varepsilon$.

We begin with the technical lemma that removes undesirable elements.

\begin{lemma}\label{le:cutsmall}
Let $G$ be a finite group, and let $S,T\subseteq G$ with $e\notin T=T^{-1}$. Then there is a subset $U\subseteq S$ with $|U|\geq\frac{|S|}{|T|+1}$ and $U^{-1}U\cap T=\emptyset$.
\end{lemma}

\begin{proof}
We construct two sequences of sets $U_{i},V_{i}\subseteq S$ satisfying
\begin{align}
U_{i}\cap V_{i} & =\emptyset, \label{eq:disjpro0} \\
U_{i}^{-1}U_{i}\cap T & =\emptyset, \label{eq:disjpro1} \\
U_{i}^{-1}V_{i}\cap T & =\emptyset. \label{eq:disjpro2}
\end{align}
Set $U_{0}=\emptyset$ and $V_{0}=S$, which satisfy trivially the properties above. At the $i$-th step, if $V_{i-1}$ is nonempty, we take a random $x_{i}\in V_{i-1}$ and define
\begin{align*}
U_{i} & =U_{i-1}\cup\{x_{i}\}, & V_{i} & =V_{i-1}\setminus(\{x_{i}\}\cup x_{i}T).
\end{align*}
The condition \eqref{eq:disjpro0} for $i-1$ and the construction imply \eqref{eq:disjpro0} for $i$. The conditions \eqref{eq:disjpro1}-\eqref{eq:disjpro2} for $i-1$ and $e\notin T=T^{-1}$ imply \eqref{eq:disjpro1} for $i$. The condition \eqref{eq:disjpro2} for $i-1$ and the construction imply \eqref{eq:disjpro2} for $i$.

Thus, the sequence can be built until we reach $V_{k}=\emptyset$. We always reach this stage, because $|V_{i}|<|V_{i-1}|$ for all $i$. Moreover, $|U_{i}|=|U_{i-1}|+1$ and $|V_{i}|\geq|V_{i-1}|-|T|-1$. Therefore, $U=U_{k}\subseteq S$ at the final step satisfies $|U|\geq\frac{|S|}{|T|+1}$, and \eqref{eq:disjpro1} means that $U^{-1}U\cap T=\emptyset$.
\end{proof}

For a fixed group $G$ and a fixed $0<\varepsilon<1$, define the following two properties for any subset $X\subseteq G$:
\begin{align*}
P_{1}(X): & \text{ every conjugacy class $C\neq\{e\}$ with $|C|<|X|^{\frac{1}{3}\varepsilon}$ has $X^{-1}X\cap C=\emptyset$,} \\
P_{2}(X): & \text{ every conjugacy class $C\neq\{e\}$ with $|C|\geq|X|^{\frac{1}{3}\varepsilon}$ has $|X^{-1}X\cap C|<|C|^{\frac{1}{4}}$} \\
& \text{ and $|XX^{-1}\cap C|<|C|^{\frac{1}{4}}$.}
\end{align*}
In \S\ref{se:xcldelta} we dealt with $X$ satisfying $P_{1}(X)$ and not $P_{2}(X)$. Now we prove growth for $X$ satisfying both $P_{1}(X)$ and $P_{2}(X)$.

\begin{lemma}\label{le:xclsmall}
Let $0<\varepsilon<1$, and let $G$ be a finite simple group of Lie type. For any $X$ with $|X|\geq q^{100r/\varepsilon}$ such that $P_{1}(X),P_{2}(X)$ hold, there are some $g\in G$ and some $X'\subseteq XX^{g}$ with $|X'|\geq|X|^{2-\varepsilon}$ such that $P_{1}(X')$ holds.
\end{lemma}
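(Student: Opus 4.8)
The plan is to first find a conjugate $X^{g}$ for which the product $XX^{g}$ is almost as large as the trivial bound $|X|^{2}$ allows, and then to extract from $XX^{g}$ a subset $X'$ that is still large and satisfies $P_{1}$. The first step is a map-counting argument of the kind mentioned at the start of this section, and the role of $P_{1}(X),P_{2}(X)$ is precisely to make that count succeed; the second step is a direct application of Lemma~\ref{le:cutsmall} and Proposition~\ref{pr:fewclass}, costing only a small power of $|X|$ thanks to the hypothesis $|X|\geq q^{100r/\varepsilon}$.

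\emph{Step 1: a good conjugate.} For $g\in G$ set $r_{g}(t)=|\{(a,b)\in X\times X^{g}:ab=t\}|$, so that $\sum_{t}r_{g}(t)=|X|^{2}$ and, by Cauchy--Schwarz, $|XX^{g}|\geq|X|^{4}/E_{g}$ with $E_{g}=\sum_{t}r_{g}(t)^{2}$. Counting quadruples $ab=a'b'$ and parametrising them by $y=a^{-1}a'=b(b')^{-1}$, the contribution of $y=e$ to $E_{g}$ is exactly $|X|^{2}$, and a term with $y\neq e$ can be nonzero only when $y\in X^{-1}X$; moreover, by $P_{1}(X)$ such a $y$ lies in a conjugacy class of size at least $|X|^{\frac{1}{3}\varepsilon}$ (otherwise $X^{-1}X$ would miss that class). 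Averaging $E_{g}-|X|^{2}$ over $g\in G$ by a routine double counting (in the spirit of the proof of Lemma~\ref{le:piececl}) rewrites it as
\begin{equation*}
\frac{1}{|G|}\sum_{g\in G}(E_{g}-|X|^{2})=\sum_{C\neq\{e\}}\frac{M_{1}(C)M_{2}(C)}{|C|},
\end{equation*}
the sum running over conjugacy classes, where $M_{1}(C)=|\{(a,b)\in X^{2}:ba^{-1}\in C\}|$ and $M_{2}(C)=|\{(a,b)\in X^{2}:a^{-1}b\in C\}|$. By $P_{1}(X)$ one has $M_{2}(C)=0$ whenever $|C|<|X|^{\frac{1}{3}\varepsilon}$, so only the larger classes contribute; for those, $M_{1}(C)\leq|X|\,|XX^{-1}\cap C|<|X|\,|C|^{1/4}$ and $M_{2}(C)\leq|X|\,|X^{-1}X\cap C|<|X|\,|C|^{1/4}$ by $P_{2}(X)$, so the contribution of such a $C$ is at most $|X|^{2}|C|^{-1/2}\leq|X|^{2-\frac{1}{6}\varepsilon}$. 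Since there are at most $q^{4r}$ classes (Proposition~\ref{pr:fewclass}) and $q^{4r}\leq|X|^{\frac{1}{25}\varepsilon}$ (from $|X|\geq q^{100r/\varepsilon}$), the whole average is $<|X|^{2-\frac{1}{10}\varepsilon}$. Hence there is $g\in G$ with $E_{g}<|X|^{2}+|X|^{2-\frac{1}{10}\varepsilon}<2|X|^{2}$, which gives $|XX^{g}|>\tfrac{1}{2}|X|^{2}$.

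\emph{Step 2: restoring $P_{1}$.} Let $S=XX^{g}$, and let $T$ be the set of all non-identity elements of $G$ whose conjugacy class has size $<\theta$, where $\theta=|X|^{\frac{2}{3}\varepsilon}$. Then $e\notin T=T^{-1}$, and $|T|<q^{4r}\theta<|X|^{\frac{3}{4}\varepsilon}$ by Proposition~\ref{pr:fewclass} together with $q^{4r}\leq|X|^{\frac{1}{25}\varepsilon}$. Lemma~\ref{le:cutsmall} produces $X'\subseteq S=XX^{g}$ with $(X')^{-1}X'\cap T=\emptyset$ and
\begin{equation*}
|X'|\geq\frac{|S|}{|T|+1}>\frac{|X|^{2}/2}{|X|^{\frac{3}{4}\varepsilon}+1}\geq|X|^{2-\varepsilon},
\end{equation*}
the last step using $|X|^{\frac{1}{4}\varepsilon}\geq 2^{25}$, a consequence of $|X|\geq q^{100r/\varepsilon}$. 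Finally, $|X'|\leq|XX^{g}|\leq|X|^{2}$, so every class $C\neq\{e\}$ with $|C|<|X'|^{\frac{1}{3}\varepsilon}$ has $|C|<\theta$; hence $C\subseteq T$ and $(X')^{-1}X'\cap C=\emptyset$, i.e.\ $P_{1}(X')$ holds, which completes the argument.

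\emph{The main obstacle.} The delicate point is the averaging identity in Step~1: one has to recognise that, after averaging $E_{g}$ over the conjugates, the off-diagonal energy organises itself over conjugacy classes into the shape $\sum_{C}|C|^{-1}M_{1}(C)M_{2}(C)$ with $M_{1}$ tied to $XX^{-1}$ and $M_{2}$ to $X^{-1}X$ --- which is exactly why $P_{2}$ is stated with a bound on the intersection of $C$ with \emph{both} of these sets. Once this is set up, $P_{1}(X)$ eliminates the small classes outright, $P_{2}(X)$ makes each surviving class contribute only $|X|^{2-\frac{1}{6}\varepsilon}$, and Proposition~\ref{pr:fewclass} bounds the number of survivors; everything else is bookkeeping, with the only real care being to keep each loss strictly below $\varepsilon$ rather than a fixed multiple of it.
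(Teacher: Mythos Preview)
Your proof is correct. The paper's own argument differs in Step~1: rather than bounding the average multiplicative energy and invoking Cauchy--Schwarz, it shows directly that for each nontrivial class $C$ with $|C|\geq|X|^{\frac{1}{3}\varepsilon}$ the set of $g$ with $(X^{-1}X\cap C)\cap(XX^{-1}\cap C)^{g}\neq\emptyset$ has size at most $|G|/(2q^{4r})$ (a one-line count identical in spirit to Lemma~\ref{le:piececl}), and then takes a union bound over the at most $q^{4r}$ classes to find a single $g$ with $X^{-1}X\cap(XX^{-1})^{g}=\{e\}$. This gives $|XX^{g}|=|X|^{2}$ exactly, not just $|X|^{2}/2$. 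Step~2 is then the same as yours, with $\theta=|XX^{g}|^{\frac{1}{3}\varepsilon}$ in place of your $|X|^{\frac{2}{3}\varepsilon}$. Your energy route is the standard additive-combinatorics packaging of essentially the same double counting (your averaging identity unwinds to per-class contributions that match the paper's per-class count); the paper's version is a bit shorter and avoids Cauchy--Schwarz, while yours is the more familiar template and makes transparent why $P_{2}$ must bound the intersection with \emph{both} $X^{-1}X$ and $XX^{-1}$. One minor wording slip: in your parametrisation one actually has $a^{-1}a'=g^{-1}b(b')^{-1}g$, conjugate rather than equal; your subsequent computation is unaffected.
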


\begin{proof}
We start by arguing similarly to Lemma~\ref{le:piececl}. Let $C$ be any nontrivial conjugacy class with $|C|\geq|X|^{\frac{1}{3}\varepsilon}$, and let $Z_{1}=X^{-1}X\cap C$ and $Z_{2}=XX^{-1}\cap C$. Assume that there are $\geq\frac{|G|}{2q^{4r}}$ values of $g\in G$ such that $Z_{1}\cap Z_{2}^{g}\neq\emptyset$. Then
\begin{equation*}
\frac{|G||Z_{2}|}{|C|}|Z_{1}|=|\{(x,g)\in Z_{1}\times G|x\in Z_{2}^{g}\}|=\sum_{g\in G}|Z_{1}\cap Z_{2}^{g}|\geq\frac{|G|}{2q^{4r}},
\end{equation*}
implying that $|C|\leq 2q^{4r}|Z_{1}||Z_{2}|$. However, $2q^{4r}<|X|^{\frac{1}{6}\varepsilon}\leq|C|^{\frac{1}{2}}$, so we obtain $\max\{|Z_{1}|,|Z_{2}|\}\geq|C|^{\frac{1}{4}}$ contradicting $P_{2}(X)$. Hence, there are $>\left(1-\frac{1}{2q^{4r}}\right)|G|$ values of $g\in G$ such that $Z_{1}\cap Z_{2}^{g}=\emptyset$.

By Proposition~\ref{pr:fewclass} there are at most $q^{4r}$ conjugacy classes in $G$, and for the nontrivial classes with $|C|<|X|^{\frac{1}{3}\varepsilon}$ we know that $X^{-1}X\cap C=\emptyset$ by $P_{1}(X)$. Thus, there must exist some $g\in G$ for which $(X^{-1}X\cap C)\cap(XX^{-1}\cap C)^{g}=\emptyset$ for all nontrivial conjugacy classes $C$ at the same time. In other words, we must have $X^{-1}X\cap(XX^{-1})^{g}=\{e\}$ for some $g$, which implies $|XX^{g}|=|X|^{2}$.

Finally, let $T$ be the union of all nontrivial classes of size $<|XX^{g}|^{\frac{1}{3}\varepsilon}$; then $e\notin T=T^{-1}$ and $|T|\leq q^{4r}|X|^{\frac{2}{3}\varepsilon}<\frac{1}{2}|X|^{\varepsilon}$. Applying Lemma~\ref{le:cutsmall} to $S=XX^{g}$, the resulting subset $U$ satisfies $|U|\geq|X|^{-\varepsilon}|XX^{g}|=|X|^{2-\varepsilon}$ and $U^{-1}U\cap T=\emptyset$, which implies a fortiori $P_{1}(U)$. Take $X'=U$.
\end{proof}

The formulation of Lemma~\ref{le:xclsmall} now makes the induction process evident. At this point we have all the tools to prove the main theorem.

\begin{proof}[Proof of Theorem~\ref{th:main}]
We make the set $A$ grow in stages.

If $|A|<q^{\frac{1}{4}r}$, then by Proposition~\ref{pr:smallgrow} there is some $m$ and there are $g_{1},\ldots,g_{m}$ such that
\begin{align*}
A_{1} & =A^{g_{1}}A^{g_{2}}\ldots A^{g_{m}}, & |A_{1}|=|A|^{m}\geq\frac{q^{\frac{1}{2}r}}{|A|}\geq q^{\frac{1}{4}r}.
\end{align*}
In this case, set $N_{1}=m=\frac{\log|A_{1}|}{\log|A|}$. If on the other hand $|A|\geq q^{\frac{1}{4}r}$, set $A_{1}=A$ and $N_{1}=1$. In either situation, $A_{1}$ is a product of $N_{1}$ conjugates of $A$, and $|A_{1}|\geq q^{\frac{1}{4}r}$.

If $|A_{1}|<q^{\frac{240}{\varepsilon}r}$, set $A_{2,0}=A_{1}$, and for every $A_{2,i-1}$ we know by Theorem~\ref{th:ptc} that either there is some $g$ for which we get
\begin{align*}
A_{2,i} & =A_{2,i-1}A_{2,i-1}^{g}, & |A_{2,i-1}|^{1+\tau}\leq|A_{2,i}|\leq|A_{2,i-1}|^{2}
\end{align*}
or there are $g_{1},\ldots,g_{b}$ such that
\begin{equation*}
A_{2,i-1}^{g_{1}}\ldots A_{2,i-1}^{g_{b}}=G.
\end{equation*}
We stop at the smallest $k$ for which
\begin{align*}
A_{2,k} & =G & & \text{or} & q^{\frac{240}{\varepsilon}r}\leq|A_{2,k}|\leq q^{\frac{480}{\varepsilon}r}.
\end{align*}
Setting $A_{2}=A_{2,k}$, we have that $A_{2}$ is a product of at most $N_{2}$ conjugates of $A_{1}$ with
\begin{align*}
N_{2} & =2^{k}b, & k & \leq\frac{\log\log|A_{2}|-\log\log|A_{1}|}{\log(1+\tau)}\leq\frac{\log\left(\frac{1920}{\varepsilon}\right)}{\log(1+\tau)},
\end{align*}
If we already have $|A_{1}|\geq q^{\frac{240}{\varepsilon}r}$, we set $A_{2}=A_{1}$ and $N_{2}=1$; the lower bound on $|A_{2}|$ and the facts concerning $N_{2}$ that we had before hold trivially in this case too.

Let $T$ be the union of all nontrivial classes of size $<|A_{2}|^{\frac{1}{3}\varepsilon}$; then $e\notin T=T^{-1}$ and $|T|\leq q^{4r}|A_{2}|^{\frac{1}{3}\varepsilon}<\frac{1}{2}|A_{2}|^{\varepsilon}$ by Proposition~\ref{pr:fewclass}. Applying Lemma~\ref{le:cutsmall} to $S=A_{2}$, the resulting subset $U=A_{3}$ satisfies $|A_{3}|\geq|A_{2}|^{1-\varepsilon}$ and $A_{3}^{-1}A_{3}\cap T=\emptyset$, which implies a fortiori $P_{1}(A_{3})$. Note also that $|A_{3}|\geq|A_{2}|^{1-\varepsilon}\geq q^{\frac{120}{\varepsilon}r}$.

We construct a sequence of sets $A_{4,i}$, starting with $A_{4,0}=A_{3}$: we apply Lemma~\ref{le:xclsmall} as long as $P_{2}(A_{4,i})$ holds, and call $A_{4,i+1}=X'$ as in the conclusion of that result. We stop at the smallest $k$ for which $P_{2}(A_{4,k})$ does not hold (it may be that $k=0$ already). At the final step, we have reached a set $A_{4}=A_{4,k}$ contained in the product of $N_{4}$ conjugates of $A_{3}$ (and of $A_{2}$ as well), with the following properties:
\begin{align*}
|A_{4}| & \geq|X_{3}|^{(2-\varepsilon)^{k}} & & \Longrightarrow & k & \leq\frac{\log\log|A_{4}|-\log\log|A_{3}|}{\log(2-\varepsilon)}, \\
N_{4} & =2^{k} & & \Longrightarrow & N_{4} & \leq\left(\frac{\log|A_{4}|}{\log|A_{3}|}\right)^{\frac{\log 2}{\log(2-\varepsilon)}}<\left(\frac{\log|A_{4}|}{\log|A_{3}|}\right)^{1+\varepsilon} \\
& & & & & \leq \left(\frac{\log|A_{4}|}{(1-\varepsilon)\log|A_{2}|}\right)^{1+\varepsilon}\leq 2\left(\frac{\log|A_{4}|}{\log|A_{2}|}\right)^{1+\varepsilon}.
\end{align*}

At this point we have $A_{4}$ for which $P_{1}(A_{4})$ holds and $P_{2}(A_{4})$ does not hold. In particular, there is some nontrivial class $C$ for which $|C|\geq|A_{4}|^{\frac{1}{3}\varepsilon}$ and either $|A_{4}^{-1}A_{4}\cap C|\geq|C|^{\frac{1}{4}}$ or $|A_{4}A_{4}^{-1}\cap C|\geq|C|^{\frac{1}{4}}$. Since $|A_{4}|\geq|A_{3}|\geq q^{\frac{120}{\varepsilon}r}$, it must be that $|C|\geq q^{40r}$. Let $A_{5}$ be the set of larger size between $A_{4}^{-1}A_{4}\cap C$ and $A_{4}A_{4}^{-1}\cap C$: in either case, by definition $A_{5}$ is contained in the product of $N_{5}=2$ conjugates of either $A_{4}$ or $A_{4}^{-1}$.

We have two cases. Let $\eta=\eta\left(\frac{1}{40}\right)$ as in Theorem~\ref{th:dmp}, and suppose that $|C|\leq|G|^{\eta}$. Then, we can use Proposition~\ref{pr:xcldelta} with $\delta=\frac{1}{4}$ and $X=A_{5}$; hence, $G$ is the product of $N_{6,1}$ conjugates of $A_{5}$ for some absolute constant $N_{6,1}$. Suppose instead that $|C|>|G|^{\eta}$, which means that $|A_{5}|\geq|G|^{\frac{\eta}{4}}$. Then, we can apply Theorem~\ref{th:ptc} repeatedly as we did above, showing that $G$ is the product of $N_{6,2}$ conjugates of $A_{5}$ for
\begin{align*}
N_{6,2} & =2^{k'}b, & k' & \leq\frac{\log\log|G|-\log\log|A_{5}|}{\log(1+\tau)}\leq\frac{\log\left(\frac{4}{\eta}\right)}{\log(1+\tau)}.
\end{align*}
Define $N_{6}=\max\{N_{6,1},N_{6,2}\}$.

Putting all the stages together, $G$ is the product of $N$ conjugates of either $A$ or $A^{-1}$ with
\begin{align}
N & =N_{1}N_{2}N_{4}N_{5}N_{6} \nonumber \\
& \leq\frac{\log|A_{1}|}{\log|A|}\cdot N_{2}\cdot 2\left(\frac{\log|A_{4}|}{\log|A_{2}|}\right)^{1+\varepsilon}\cdot 2\cdot N_{6} \label{eq:finale} \\
& \leq 4N_{2}N_{6}\cdot\left(\frac{\log|G|}{\log|A|}\right)^{1+\varepsilon}. \nonumber
\end{align}
Since $N_{6}$ is an absolute constant and $N_{2}$ depends only on $\varepsilon$, the result follows.
\end{proof}

\begin{remark}\label{re:pyber}
As remarked by L.~Pyber (personal communication), the number of conjugates can also be bounded as
\begin{equation*}
N_{\varepsilon}r^{\varepsilon}\cdot\frac{\log|G|}{\log|A|}
\end{equation*}
where $r$ is the rank of $G$. In fact, observe that in our proof $|A_{2}|\geq q^{r}$ and that $|A_{4}|\leq|G|\leq q^{8r^{2}}$ by Proposition~\ref{pr:qnclass}, so inside \eqref{eq:finale} one can use the bound $\frac{\log|A_{4}|}{\log|A_{2}|}\leq 8r$.

This provides a more direct improvement to \eqref{eq:boundgpss}. In the proof of \cite[Thm.~1.3]{GPSS13}, the resulting function is of the form $N(r)=O(r^{e^{1/\varepsilon(r)}})$ where $\varepsilon(r)$ comes from using some version of the product theorem for Babai's conjecture. Depending on which version we use, one may obtain $1/\varepsilon(r)$ being: a non-explicit function \cite{BGT11}, a tower of exponential of height depending on $r$ \cite{PS16}, or a polynomial in $r$ for untwisted classical groups \cite{BDH21}. Furthermore, it is known by explicit counterexamples \cite[Ex.~77]{PS16} that passing through such theorems one cannot do better than $1/\varepsilon(r)$ linear in $r$. If we used Theorem~\ref{th:ptc} instead of product theorems as above, we would still obtain $N(r)=O(r^{N})$ for some constant $N$. In all cases, $N(r)=O_{\varepsilon}(r^{\varepsilon})$ as we have here is a better bound.
\end{remark}

\section*{Acknowledgements}

The author thanks several members of the R\'enyi Institute (A.~Mar\'oti, L.~Sabatini, S.~Skresanov, D.~Szab\'o) for discussions about the problem and its techniques, which helped improve the structure of the proof. The author was funded by a Young Researcher Fellowship from the R\'enyi Institute.

\bibliography{Bibliography}
\bibliographystyle{alpha}

\end{document}